\documentclass[10pt,a4paper]{amsart}
\usepackage[margin=20mm]{geometry}
\usepackage[numbers]{natbib}
\usepackage[colorlinks]{hyperref}

\usepackage{amssymb,amsmath}
\usepackage{amsthm}
\usepackage{bbm}
\usepackage{stmaryrd}
\usepackage[usenames,dvipsnames]{xcolor}
\usepackage{color}

\input xy
\xyoption{all} 


\numberwithin{equation}{section}

\newtheorem{theorem}{Theorem}[section]
\newtheorem{corollary}{Corollary}[theorem]
\newtheorem{lemma}[theorem]{Lemma}
\newtheorem{proposition}[theorem]{Proposition}

\theoremstyle{definition}
\newtheorem{definition}[theorem]{Definition}
\newtheorem{remark}[theorem]{Remark}
\newtheorem{example}[theorem]{Example}

\newcommand{\Dleft}{[\hspace{-1.5pt}[}
\newcommand{\Dright}{]\hspace{-1.5pt}]}
\newcommand{\SN}[1]{\Dleft #1 \Dright}

\newcommand{\Id}{\mathbbmss{1}}

\newcommand{\rmi}{ \textnormal{i}}
\newcommand{\rmd}{\textnormal{d}}
\newcommand{\rme}{\textnormal{e}}

\newcommand{\N}{\mathbb{N}}

\DeclareMathOperator{\Der}{Der}

\font\black=cmbx10 \font\sblack=cmbx7 \font\ssblack=cmbx5 \font\blackital=cmmib10  \skewchar\blackital='177
\font\sblackital=cmmib7 \skewchar\sblackital='177 \font\ssblackital=cmmib5 \skewchar\ssblackital='177
\font\sanss=cmss10 \font\ssanss=cmss8 
\font\sssanss=cmss8 scaled 600 \font\blackboard=msbm10 \font\sblackboard=msbm7 \font\ssblackboard=msbm5
\font\caligr=eusm10 \font\scaligr=eusm7 \font\sscaligr=eusm5  \font\fraktur=eufm10
\font\sfraktur=eufm7 \font\ssfraktur=eufm5 
\font\bsymb=cmsy10 scaled\magstep2
\def\all#1{\setbox0=\hbox{\lower1.5pt\hbox{\bsymb
       \char"38}}\setbox1=\hbox{$_{#1}$} \box0\lower2pt\box1\;}
\def\exi#1{\setbox0=\hbox{\lower1.5pt\hbox{\bsymb \char"39}}
       \setbox1=\hbox{$_{#1}$} \box0\lower2pt\box1\;}

\def\tx#1{{\fam0\relax#1}}

\newfam\bifam
\textfont\bifam=\blackital \scriptfont\bifam=\sblackital \scriptscriptfont\bifam=\ssblackital

\newfam\blfam
\textfont\blfam=\black \scriptfont\blfam=\sblack \scriptscriptfont\blfam=\ssblack

\newfam\bbfam
\textfont\bbfam=\blackboard \scriptfont\bbfam=\sblackboard \scriptscriptfont\bbfam=\ssblackboard

\newfam\ssfam
\textfont\ssfam=\sanss \scriptfont\ssfam=\ssanss \scriptscriptfont\ssfam=\sssanss
\def\sss#1{{\fam\ssfam\relax#1}}

\newfam\clfam
\textfont\clfam=\caligr \scriptfont\clfam=\scaligr \scriptscriptfont\clfam=\sscaligr

\newfam\frfam
\textfont\frfam=\fraktur \scriptfont\frfam=\sfraktur \scriptscriptfont\frfam=\ssfraktur

\def\hpb#1{\setbox0=\hbox{${#1}$}
    \copy0 \kern-\wd0 \kern.2pt \box0}
\def\vpb#1{\setbox0=\hbox{${#1}$}
    \copy0 \kern-\wd0 \raise.08pt \box0}

\def\pmb#1{\setbox0\hbox{${#1}$} \copy0 \kern-\wd0 \kern.2pt \box0}
\def\pmbb#1{\setbox0\hbox{${#1}$} \copy0 \kern-\wd0
      \kern.2pt \copy0 \kern-\wd0 \kern.2pt \box0}
\def\pmbbb#1{\setbox0\hbox{${#1}$} \copy0 \kern-\wd0
      \kern.2pt \copy0 \kern-\wd0 \kern.2pt
    \copy0 \kern-\wd0 \kern.2pt \box0}
\def\pmxb#1{\setbox0\hbox{${#1}$} \copy0 \kern-\wd0
      \kern.2pt \copy0 \kern-\wd0 \kern.2pt
      \copy0 \kern-\wd0 \kern.2pt \copy0 \kern-\wd0 \kern.2pt \box0}
\def\pmxbb#1{\setbox0\hbox{${#1}$} \copy0 \kern-\wd0 \kern.2pt
      \copy0 \kern-\wd0 \kern.2pt
      \copy0 \kern-\wd0 \kern.2pt \copy0 \kern-\wd0 \kern.2pt
      \copy0 \kern-\wd0 \kern.2pt \box0}


\mathchardef\za="710B  
\mathchardef\zb="710C  
\mathchardef\zg="710D  
\mathchardef\zd="710E  
\mathchardef\zve="710F 
\mathchardef\zz="7110  
\mathchardef\zh="7111  
\mathchardef\zvy="7112 
\mathchardef\zi="7113  
\mathchardef\zk="7114  
\mathchardef\zl="7115  
\mathchardef\zm="7116  
\mathchardef\zn="7117  
\mathchardef\zx="7118  
\mathchardef\zp="7119  
\mathchardef\zr="711A  
\mathchardef\zs="711B  
\mathchardef\zt="711C  
\mathchardef\zu="711D  
\mathchardef\zvf="711E 
\mathchardef\zq="711F  
\mathchardef\zc="7120  
\mathchardef\zw="7121  
\mathchardef\ze="7122  
\mathchardef\zy="7123  
\mathchardef\zf="7124  
\mathchardef\zvr="7125 
\mathchardef\zvs="7126 
\mathchardef\zf="7127  
\mathchardef\zG="7000  
\mathchardef\zD="7001  
\mathchardef\zY="7002  
\mathchardef\zL="7003  
\mathchardef\zX="7004  
\mathchardef\zP="7005  
\mathchardef\zS="7006  
\mathchardef\zU="7007  
\mathchardef\zF="7008  
\mathchardef\zW="700A  
\mathchardef\zC="7009  

\newcommand{\be}{\begin{equation}}
\newcommand{\ee}{\end{equation}}

\newcommand{\bea}{\begin{eqnarray}}
\newcommand{\eea}{\end{eqnarray}}
\def\*{{\textstyle *}}
\newcommand{\R}{{\mathbb R}}

\newcommand{\C}{{\mathbb C}}
\newcommand{\Z}{{\mathbb Z}}

\newcommand{\s}{{\textstyle *}}


\newcommand{\A}{{\mathcal A}}

\def\Sec{\sss{Sec}}




\def\xi{\tx{i}}


\def\s*{{\scriptstyle *}}


\newcommand{\beas}{\begin{eqnarray*}}
\newcommand{\eeas}{\end{eqnarray*}}

\def\half{\frac{1}{2}}

\newcommand{\B}{\mathcal{B}}

\title{Almost commutative $Q$-algebras and derived brackets}

   \author{Andrew James Bruce}
   \address{Mathematics Research Unit, University of Luxembourg, Maison du Nombre   			6, avenue de la Fonte,  L-4364 Esch-sur-Alzette, Luxemburg}
   \email{andrewjamesbruce@googlemail.com}


\date{\today}
 \begin{document}

\begin{abstract}
We introduce the notion of \emph{almost commutative Q-algebras} and demonstrate how the derived bracket formalism of Kosmann-Schwarzbach generalises to this setting. In particular, we construct `almost commutative Lie algebroids' following Va\u{\i}ntrob's Q-manifold understanding of classical Lie algebroids. We show that the basic tenets of the theory of Lie algebroids carry over verbatim to the almost commutative world. \par
\smallskip\noindent
\begin{tabular}{ll}
{\bf Keywords:} & noncommutative geometry;~almost commutative algebras;~Lie algebroids;~Q-manifolds\\
{\bf MSC 2010:}&\hspace{-8pt} \begin{tabular}[t]{ll}
                 \emph{Primary}:& 81R60 ;~46L87;~17B75 \\
                 \emph{Secondary}:& 53D17;~58A50  
                \end{tabular}
\end{tabular}
\end{abstract}

 \maketitle

\setcounter{tocdepth}{2}
 \tableofcontents

\section{Introduction}
When quantum effects of the gravitational field become relevant, it is expected that the geometry of space-time will depart from its classical nature.  From rather general arguments, it is likely that space-time at some level will no longer be a Riemannian manifold, but instead, take on some `noncommutative manifold'  structure. This could, for example, regulate the curvature singularities encountered in classical general relativity.  It is fair to say that noncommutative geometry is `work in progress' and unfortunately nature offers almost no clues as to exactly what we should expect of noncommutative geometry. We must also mention the impact of noncommutative geometry on condensed matter physics where, as just one example, great insight into the  integer quantum Hall effect has been gained \cite{Bellissard:1994}. From a mathematical perspective, noncommutative geometry is the natural progression of space-algebra duality and sheds light on what one could possibly mean by `geometry'.  There are many approaches to noncommutative geometry and here we broach the subject by employing a very mild form of noncommutativity.  \par 

Almost commutative algebras, also known as $\rho$-commutative algebras, have been studied since the 1980s following the work of Rittenberg \& Wyler \cite{Rittenberg:1978} and Scheunert \cite{Scheunert:1979} on generalised Lie algebras. Loosely, we have an algebra that is commutative up to some ``commutation factor''.  It was shown by Bongaarts \& Pijls \cite{Bongaarts:1994} that  almost commutative algebras  offer a convenient framework to develop a very workable form of noncommutative differential geometry. A natural example of such an algebra is the $\Z_2$-commutative algebra of global functions on a supermanifold where the commutation factor is simply a plus or minus sign. We remark that the possibility of algebras with more general gradings that just $\Z$ or $\Z_2$ was suggested in 1970 by Berezin and Kac \cite{Berezin:1970}.  In particular, almost commutative algebras are `close enough' to commutative and supercommutative algebras to allow many of the constructions found in differential geometry to be directly generalised. Using almost commutative algebras one can largely mimic classical differential geometry following the derivation based approach to noncommutative geometry  of Dubois-Violette \cite{Dubois-Violette:1988}.  Many of the subtleties of working with ``fully noncommutative'' algebras disappear when using almost commutative algebras. For example, $\rho$-derivations of an almost commutative algebra form a module over the whole of the algebra.    Generically one can develop lots of geometry with minimal fuss by employing almost commutative algebras, see for example \cite{Bongaarts:1994,Ciupala:2005,Ciupala:2005b,deGorsac:2012,Ngakeu:2007,Ngakeu:2012,Ngakeu:2017}. Specifically, one does not need to reach for the $C^*$-algebraic approach to noncommutative geometry as pioneered by Connes in order to build ``mildly'' noncommutative spaces.  Moreover, interesting and well-known examples of such algebras exist such  quantum hyperplanes, super-versions thereof, noncommutative tori,  and the quaternionic algebra. All the aforementioned algebras are of direct interest in physics from various perspectives. Noncommutative tori are one of the fundamental examples of a noncommutative space and they have served as a `testbed' or `sandbox' for the development of noncommutative geometry.   \par 
In this paper, we modify the understanding of Lie algebroids due to Va\u{\i}ntrob \cite{Vaintrob:1997} to the setting of almost commutative algebras. We will work with very particular almost commutative algebras that carry an additional $\N$-grading, which we will refer to as \emph{weight}. The assignment of weight is quite independent of the commutation rules. We view these algebras as the natural generalisation of a \emph{graded bundle} or a \emph{non-negatively graded manifold}, which are themselves generalisations of vector bundles. The original idea that an $\N$-grading can be used as a replacement of a linear/vector bundle structure is due to Voronov  \cite{Voronov:2002}. This idea has since been refined and applied by many authors (see for example \cite{Grabowski:2012,Roytenberg:2002,Severa:2005,Severa:2017,Voronov:2012}). These algebras will come equipped with a homological derivation (to be defined carefully in the main sections), and we will show that the derived bracket formalism of Kosmann-Schwarzbach \cite{Kosmann-Schwarzbach:1996,Kosmann-Schwarzbach:2004} (also see Voronov \cite{Voronov:2005}) allows one to construct a `shifted' $\rho$-Lie bracket on the space of particular derivations of the almost commutative algebra.  All these constructions parallel the classical understanding of Lie algebroids in terms of graded supermanifolds equipped with a homological vector field.  Indeed, up to a shift in Grassmann party, classical Lie algebroids are examples of the structures we consider. We will assume the reader has some familiarity with the various descriptions of classical Lie algebroids and in particular  Va\u{\i}ntrob's  approach via supermanifolds.\par 
While the notion of \emph{Lie--Rinehart pair} as an algebraic generalisation of a Lie algebroid has been well studied, our motivation for this work was to understand if the picture of Lie algebroids due to Va\u{\i}ntrob has a natural generalisation in the noncommutative world. While we offer no perspective on the full noncommutative world, the almost commutative world seems very tractable and we can rather directly mimic the `super-setting' of Lie algebroids. We remark that Lie algebroids, which  were first introduced by  Pradines \cite{Pradines:1967}, are prevalent throughout differential geometry, geometric mechanics, and geometric approaches to field theory in many guises. Our general reference for Lie algebroids is the book by Mackenzie \cite{Mackenzie:2005}. Moreover, understanding Lie algebroids and related objects in terms of supergeometry has slowly been gaining acceptance. It seems natural and fitting that a similar understanding  of `algebroids' in the noncommutative setting be reached. Furthermore, it is well known that the categories of Poisson manifolds and Lie algebroids are intimately intertwined. The relation between Poisson almost commutative algebras and particular graded version of Lie algebroids was explored by Ngakeu \cite{Ngakeu:2012,Ngakeu:2017}. Given that the quasi-classical limit of a `noncommutative space-time' is expected to be related to a Poisson algebra, understanding `noncommutative Lie algebroids' in all their formulations could be a fruitful avenue of exploration.

\medskip 

\noindent \textbf{Arrangement:} In Section \ref{Sec:AlmComAlg} we review the notion of an almost commutative algebra. It is here that we introduce the concept of homological $\rho$-derivations and almost commutative Q-algebras. In Section \ref{Sec:AlComNGAlg} we present the notion of an almost commutative non-negatively graded algebra.  In Section \ref{Sec:QAlgBrack} we study almost commutative Q-algebras equipped with a non-negative grading and show how the derived bracket formalism can be applied to this setting.  As a particular example, in Section \ref{Sec:DiffCalc} we re-examine first order differential calculi on almost commutative algebras and, just as in the classical case of the de Rham complex on a smooth manifold, we are able to interpret the $\rho$-Lie bracket on $\rho$-derivations as a derived bracket. Modules over almost commutative Q-algebras are the subject of Section \ref{Sec:Qmodules}. Here we give the adjoint and coadjoint modules as explicit examples. The adjoint module is fundamental in understanding deformations and symmetries of almost commutative Q-algebras as discussed in Section \ref{Sec:DefSymm}.

\section{Almost Commutative Algebras}\label{Sec:AlmComAlg}
We draw heavily on the work of Bongaarts \& Pijls \cite{Bongaarts:1994}, Ciupal\u{a} \cite{Ciupala:2005,Ciupala:2005b} and Ngakeu \cite{Ngakeu:2012,Ngakeu:2017} in this section. We will present no proofs for the statements in this section as they follow directly from calculations or can be found in the literature previously cited. Let $G$ be a discrete abelian group, which we write additively, examples of such groups are $\Z$, $\Z_2$, $\Z_p$ ($p$ prime) and finite direct sums thereof, e.g., $\Z^2 := \Z \times \Z$. Let $\A$ be a $G$-graded algebra. This means that, as a vector space, $\A = \oplus_{a \in G}\A_a$, and that the multiplication respects this grading in the sense that $\A_a \cdot \A_b \subset \A_{a+b}$ ($a,b \in G$). The $G$-degree of a homogeneous element  $f \in \A$, we denote as $|f|$. Now assume that we have a map $\rho : G \times G \rightarrow \mathbb{K}$, where we will take the ground field to be $\R$ or $\C$, that satisfies the following:
\begin{align*}
& \rho(a,b) = \rho(b,a)^{-1},\\
& \rho(a+b, c) = \rho(a,c)\rho(b,c),
\end{align*}
for all $a,b$ and $c\in G$.  One can deduce that these conditions imply that $\rho(a,b) \neq 0$, $\rho(0,b) = 1$ and that $\rho(c,c) = \pm 1$, for all for all $a,b$ and $c\in G$. Moreover, we also have that $\rho(a, b+c) = \rho(a,b)\rho(a,c)$.  The mapping $\rho$ is a particular $2$-cocycle on the group $G$ and is often referred to a \emph{commutation factor}. For the rest of this paper we will simply refer to a cocyle.  \par 
Two cocycles $\rho$ and $\rho\prime$ on $G$ are said to be \emph{equivalent cocycles} if and only of there exists an element $\phi \in \textrm{Aut}(G)$, i.e., an element of the automorphism group of $G$, such that $\rho\prime(a,b) = \rho(\phi(a), \phi(b))$ for all $a$ and $b\in G$.
\begin{example}
If $G = \Z$, then the trivial cocycle is given by $\rho(a,b) =1$ and the standard cocycle is given by $\rho(a,b) = (-1)^{ab}$.  The standard cocycle is, of course, just the standard sign factor.
\end{example}
\begin{example}
If $G = \Z\times \Z$ and $\mathbb{K} = \C$, then $\rho((n,m), (n'm')) = \rme^{2 \pi \rmi (nm' {-} mn')}$  is a cocycle.
\end{example}
As a $G$-vector space, we have the natural definition of \emph{$G$-parity} for $G$-homogeneous elements of $\A$:
$$\# f = \frac{1 - \rho(|f|, |f|)}{2}.$$
We then say that a homogeneous element $f \in \A$ is \emph{even} if $\# f = 0$ and \emph{odd} if $\# f = 1$. For homogeneous elements $f$ and $g \in \A$, we  define the \emph{$\rho$-commutator} as
$$[f,g]_\rho :=  fg \: {-} \:  \rho(|f|,|g|) \:g f\,. $$
Extension to non-homogeneous elements is via linearity. Whenever we write an expression for homogeneous elements, we will understand that extension to non-homogeneous elements is via linearity, although we will not state this explicitly.  The reader can directly verify the following properties:
\begin{align*}
& [\A_a, \A_b ]_\rho \subset \A_{a+b},\\
& [f,g]_\rho = {-} \rho(|f|, |g|) [g,f]_\rho.
\end{align*}
A direct calculation yields the \emph{$\rho$-Jacobi identity}
$$\rho(|f|, |h|)^{-1} [f, [g,h]_\rho]_\rho  + \rho(|h|, |g|)^{-1} [h, [f,g]_\rho]_\rho  + \rho(|g|, |f|)^{-1} [g, [h,f]_\rho]_\rho  =0.$$
The properties of the $\rho$-commutator naturally suggest the definition of a \emph{$\rho$-Lie algebra}, a notion which can be traced at least to Rittenberg \& Wyler \cite{Rittenberg:1978}, Scheunert \cite{Scheunert:1979} and Mosolova \cite{Mosolova:1981}. Such generalised Lie algebras are also known as \emph{Lie colour algebras}. We will later require the notion of a `shifted' or `anti' Lie algebra.  We allow the bracket to carry a non-trivial $G$-degree, however, we will insist on `oddness'. The reader should keep in mind the classical Schouten--Nijenuis bracket on the space of multivector fields on a manifold.  The physics motivated reader should keep in mind the antibracket in the BV-BRST formalism of gauge theory. This will help explain some of our conventions and nomenclature.
\begin{definition}\label{def:rhoAntiLieAlg}
Let $\mathfrak{g}$ be a $G$-graded vector space and let $\rho$ be a cocycle. Then $\mathfrak{g}$ is a \emph{$\rho$-Loday-Leibniz antialgebra of $G$-degree $|\mathfrak{g}|$} if there exists a bilinear map, which we refer to as a \emph{$\rho$-antibracket}
\begin{align*}
\mathfrak{g} \times \mathfrak{g} & \longrightarrow \mathfrak{g}\\
(f,g) & \mapsto [f,g]_\mathfrak{g},  
\end{align*}
such that 
\begin{enumerate}
\item $\rho(|\mathfrak{g}|,|\mathfrak{g}| ) = {-}1$;
\item $|[f,g]_\mathfrak{g}| = |\mathfrak{g}| + |f| + |g|$;
\item $[f, [g,h]_\mathfrak{g}]_\mathfrak{g} =  [[f,g]_\mathfrak{g}, h]_\mathfrak{g}+ \rho(|f| + |\mathfrak{g}|  , |g|+ |\mathfrak{g}| )  \: [g, [f,h]_\mathfrak{g}]_\mathfrak{g},$
\end{enumerate}
for all $f,g$ and $h \in \mathfrak{g}$.\par 
If in addition to the above bracket has the following $\rho$-skewsymmetry property
\begin{enumerate}
\setcounter{enumi}{3}
\item $[f,g]_\mathfrak{g} = {-} \rho(|f| + |\mathfrak{g}| ,|g| + |\mathfrak{g}| ) \: [g,f]_\mathfrak{g}$,
\end{enumerate}
then we speak of a \emph{$\rho$-Lie antialgebra of $G$-degree $|\mathfrak{g}|$}.
\end{definition}
In the above definition, the $\rho$-Jacobi identity we have written in so-called Loday--Leibniz form (cf. Loday \cite{Loday:1993}). This definition has the natural interpretation of $[f, -]_\mathfrak{g}$ being a $\rho$-derivation of $G$-degree $|\mathfrak{g}| + |f| $ of the `bracket algebra'. Note that this interpretation is quite independent of the $\rho$-skewsymmetry. \par 
\begin{remark}
Our nomenclature `Lie antialgebra' and `antibracket' has been hijacked from the BV-BRST formalism in physics. Mathematically, `anti' signifies a shift in the grading as compared to  the standard case of Lie superalgebras. In  the current context, `anti' signifies that  $\rho(|\mathfrak{g}|,|\mathfrak{g}| ) = {-}1$ and not $1$.
\end{remark}
With these notions in place, we have the following definitions that will be central to the rest of this paper.
\begin{definition}
Let $\A$ be a $G$-graded algebra equipped with a given cocycle $\rho$. Then $\A$ is said to be a \emph{$\rho$-commutative algebra} or an \emph{almost commutative algebra} if and only if $[f,g]_\rho = 0$ for all $f$ and $g \in \A$.
\end{definition}
Note that in the above we fix a cocycle in the definition of an almost commutative algebra. That is, we will not consider  group automorphisms and equivalent cocycles in any of the constructions in this paper. Moreover, associativity and  ``almost commutativity''  force the cocycle to have the properties postulated - this is carefully discussed in \cite{Covolo:2016b}. 
\begin{remark}
The term \emph{$\epsilon$-graded commutative algebras} was used in \cite{deGorsac:2012} and the term \emph{colour commutative algebras} was used in \cite{Lychagin:1995} for what we call  almost commutative algebras.
\end{remark}
In particular, for odd elements of an almost commutative algebra, it is clear that $f^2 =0$. Thus, we have nilpotent elements to contend with. However, the assignment of $G$-parity does \emph{not} determine the commutation rule $fg = \rho(|f|, |g|) \:  g f$.
\begin{definition}
 Let $\A$ and $\A'$ be almost commutative algebras  with respect to some fixed abelian group $G$ and cocycle $\rho$. Then a \emph{morphism of almost commutative algebras} is a $\mathbb{K}$-linear map $\phi^\# : \A' \longrightarrow A$, such that 
 \begin{enumerate}
 \item $|\phi^\#(f)| = |f|$, and
 \item $\phi^\#(fg) = \phi^\#(f) \: \phi^\#(g)$,
 \end{enumerate}
 for all $f$ and $g \in \A'$. 
\end{definition}
Note that we consider only degree zero morphisms, that is, morphisms do not change the $G$-degree of elements. This requirement is very natural from the perspective of constructing geometries, but from an algebraic perspective, this can be relaxed. The reader can easily verify that $\rho$-commutativity is preserved under such morphisms. Evidently, we obtain the category of $(G,\rho)$ almost commutative algebras.
\begin{example}[Commutative algebras]
Any commutative algebra can be considered as an almost commutative algebra by taking $G$ as the trivial group and the trivial cocycle. As a specific example, the algebra of global functions on a smooth manifold is a commutative algebra.
\end{example}
\begin{example}[Supercommutative algebras]
Any supercommutative algebra can be considered as an almost commutative algebra by taking $G = \Z_2$ and the cocycle to be $\rho(a,b) = (-1)^{a b}$. As a specific example, the algebra of global functions of a supermanifold is a supercommutative algebra.
\end{example}
\begin{example}[Quaternions \cite{Morier-Genoud:2010}]\label{exp:Quarternions}
 The quaternionic algebra $\mathbb{H}$ is generated by the elements $\Id, e_1, e_2$ and $e_3$, subject to the relations
\begin{align*}
e_i^2 & = - \Id, & e_ie_j = {-} e_j e_i ~  ~ (i \neq j) && e_1 e_2 = e_3, && e_3 e_1 = e_2, && e_2 e_3 = e_1.
\end{align*}
The quaternionic algebra is an almost commutative algebra with $G = \Z_2^2 = \Z_2 \times \Z_2$, where the generators are assigned the grading
\begin{align*}
|\Id| = (0,0), && |e_1| = (1,0), && |e_2| = (0,1), && |e_3| = (1,1).
\end{align*}
The relevant cocycle is given by
$$ \rho((a_1, b_1), (a_2, b_2)) = (-1)^{a_1 b_2 {-} a_2 b_1}\,.$$
\end{example}
\begin{example}[$\Z_2^n$-commutative algebras \cite{Berezin:1970,Morier-Genoud:2010}]
A $\Z_2^n$-graded, $\Z_2^n$-commutative algebra ($n \in \N$) can be considered as an almost commutative algebra by taking $G = \Z_2^n := \Z_2 \times \Z_2 \times, \cdots , \times \Z_2$  ($n$-times) and the cocycle to be $\rho(|f|, |g|) = (-1)^{\langle|f|, |g| \rangle}$, where $\langle - , - \rangle $ is the standard scalar product on $\Z_2^n$. In fact, provided we have a finite number of generators, any sign rule for the permutation of elements can be  accommodated  in this way (\cite[Theorem 2]{Morier-Genoud:2010}).   As a specific example, the algebra of global functions of a $\Z_2^n$-manifold is a $\Z_2^n$-commutative algebra (see Covolo et al. \cite{Covolo:2016}).
\end{example}
\begin{remark}
It has been long known that $\Z_2$-gradings play a fundamental r\^{o}le in physics as soon as fermions are taken into account. Far less well known is that  $\Z_2^n$-gradings  $(n>1)$ naturally appear in the context of Green's parastatistics, see for example Dr\"{u}hl, Haag \& ~Roberts \cite{Druhl:1970}.
\end{remark}

\begin{example}[2-d quantum plane]
The quantum plane is described by the algebra $S_2^q$, which is generated by the unit element and two linearly independent generators $x$ and $y$, subject to the relation 
$$xy  \: {-} \:   q\: yx =0,$$
where $q \in \R$  is fixed and non-zero. The algebra $S_2^q$ is naturally $\Z^2$-graded,
$$S_2^q = \bigoplus_{n,m}^\infty  \big (S_2^q  \big)_{n,m} \: ,$$
where $\big (S_2^q  \big)_{n,m}$ is the one-dimensional vector space spanned by products of the form $x^n y^m$. Note that $ \big (S_2^q  \big)_{n,m} =0$ if either $n$ or $m$ are negative.  It is not difficult to see that the relation on the generators translates to
$$(x^n y^m) (x^{n'} y^{m'}) = q^{nm' - mn'} \:  (x^{n'} y^{m'}) (x^n y^m).$$
Clearly, we have an almost commutative algebra by defining the cocycle  as
$$\rho\big( (n,m) , (n', m')  \big) = q^{nm' - mn'}. $$
We just remark that the $N$-dimensional quantum hyperplane can similarly be defined and understood in the context of almost commutative algebras. Similarly, one can consider `super' versions of quantum hyperplanes following Manin \cite{Manin:1989}.  Such algebras are often referred to as \emph{quasi-commutative algebras}.
\end{example}
\begin{example}[Noncommutative 2-torus]\label{exp:NCTorus}
The algebra of functions on a noncommutative 2-torus denoted as $\A_\theta$, is the algebra generated by two unitary generators, $u$ and $v$, subject to the relation
$$uv \: {-} \:  \rme^{2 \pi \rmi \theta } \: vu =0,$$
with $\theta \in \R$. Much like the example of the 2-d quantum plane the algebra $\A_\theta$ is naturally $\Z^2$-graded
$$\A_\theta = \bigoplus_{n,m}^\infty  \big (\A_\theta  \big)_{n,m} \: ,$$
where $\big (\A_\theta  \big)_{n,m}$ is the one-dimensional vector space spanned by products of the form $u^n v^m$.  Again in almost the same way as the 2-d quantum plane, we naturally have an almost commutative algebra by defining the cocycle as 
$$\rho\big( (n,m) , (n', m')  \big) = \rme^{2 \pi \rmi (nm' - mn')}. $$
Noncommutative $N$-tori can similarly be defined and these are also further examples of almost commutative algebras. 
\end{example}
\begin{remark}
As is well-known, the algebra of functions on noncommutative tori  are, in fact,  $C^*$-algebras and so are prototypical examples of quantum spaces according to Connes. However, we will not make use of $C^*$-algebra structures  in the  proceeding constructions nor in any further examples.
\end{remark}
\begin{example}[Taft algebras] Similar to the previous examples, the Taft algebra $T(p)$ for any positive integer $p$ is generated by the unit element and two linearly independent generators $x$ and $y$, subject to the relations
\begin{align*}
x^p = 1, && y^p = 0, && xy = \zx yx,
\end{align*}
where $\zx$ is a primitive $p^\textnormal{th}$-root of unity.  Naturally, this algebra is $\Z_p^2 := \Z_p \times \Z_p$-graded and indeed an almost commutative algebra where the cocycle is given by 
$$\rho\big( (n,m) , (n', m')  \big) = \zx^{nm' - mn'}. $$
\end{example}
\begin{remark}
It is well-known that Taft algebras are examples of a Hopf algebra. Indeed, Taft algebras provide the first examples of neither commutative, nor cocommutative Hopf algebras. However, we will not make use of  Hopf algebra structures in the  proceeding constructions nor in any further examples.
\end{remark}
\begin{definition}
Let $\A$ be a $G$-graded algebra (not necessarily almost commutative). Then a \emph{$\rho$-derivation} $X$ of $\A$ of $G$-degree $|X|$ is a linear map $X: \A \rightarrow \A$ of $G$-degree $X$, that satisfies the \emph{$\rho$-derivation rule} 
$$ X(fg) =  (Xf)g + \rho(|X|, |f|) f (Xg),$$
for all homogeneous elements $f$ and $g \in \A$.
\end{definition}
The reader can directly verify that the $\rho$-commutator of two $\rho$-derivation is again a $\rho$-derivation. Thus, the vector space of all $\rho$-derivations of $\A$ forms a $\rho$-Lie algebra of $G$-degree $0$. We denote this  $\rho$-Lie algebra as $\rho\Der(\A)$.  Provided $\A$ is an almost commutative algebra, then the vector space of  $\rho$-derivations of $\A$ is  in fact a left $\A$-module, with the action of $\A$ on $\rho\Der(\A)$ being the obvious one, i.e.,
$$(fX)g = f(Xg),$$
where $X \in \rho\Der(\A) $ and $f,g \in \A$. The reader can also quickly verify that we have a \emph{$\rho$-Leibniz rule} 
\begin{equation}\label{eq:LeibnizRuleLie}
[X, f Y ]_\rho = X(f) Y + \rho(|X|, |f|) \: f \: [X,Y]_\rho,
\end{equation}
for any and all $X,Y \in \rho\Der(\A)$ and $f\in \A$.
\begin{definition}\label{def:foliation}
Let $\A$ be an almost commutative algebra. Then a \emph{foliation in} $\A$ is a subspace $F \subset \rho\Der(A)$ such that
\begin{enumerate}
\item $F$ is a left $A$-submodule, and
\item $F$ is a $\rho$-Lie subalgebra. 
\end{enumerate}
The pair $(\A, F)$ is referred to as a \emph{foliated almost commutative algebra}. 
\end{definition}
 \begin{remark}
 The notion of a foliated commutative algebra, as far as we know, goes back to Jan Kubarski in unpublished notes from the 6th International Algebraic Conference in Ukraine, July 1-7, 2007. This notion directly generalises to the `almost' case because we have a module structure. As is well known, for more general noncommutative algebras the space of derivations is a module over the centre the algebra and \emph{not} the whole of the algebra.
 \end{remark}
Because the bracket on $\rho$-derivations of $\A$ is the $\rho$-commutator, we have an almost commutative generalisation of a homological vector field, i.e., an odd vector field on a supermanifold that `squares to zero' (see \cite{Aleksandrov:1997}).
\begin{definition}\label{def:almostCommQ}
Let $\A$ be an almost commutative algebra.   A $\rho$-derivation $Q\in \rho\Der(\A)$ is said to be a \emph{homological $\rho$-derivation of $\A$} if and only if
\begin{enumerate}
\item $\# Q = 1$;
\item $[Q,Q]_\rho =0.  $
\end{enumerate}
\end{definition}

For notational ease, we set $Q^2  :=  Q\circ Q = \half [Q,Q]_\rho$ , which from the above definition, is equal to zero. The notion of closed and exact elements of $\A$ makes perfect sense. That is, $f \in \A$ is $Q$-closed if $Qf =0$, and $Q$-exact if there exists some $g \in \A$ such that $f= Qg$. Clearly, as $Q^2 =0$, $Q$-exact elements are automatically $Q$-closed. Thus, we can develop a cohomology theory, though we do not pursue that here. We will refer to the pair $(\A, Q)$ as an \emph{almost commutative $Q$-algebra} taking our nomenclature from Schwarz \cite{Schwarz:1999,Schwarz:2003} who studied a much more general notion than  which we propose.  It is clear that the pair $(\A, Q)$ is, in fact, a foliated almost commutative algebra, see Definition \ref{def:foliation}. A quick calculation shows that for any $f$ and $g \in \A$
$$[f\:Q, g\:Q]_\rho = \big( f \:Q(g) ~{-}~ \rho(|Q| + |f| , |Q| + |g|) ~ g\: Q(f) \big)Q,$$
and so we indeed have a $\rho$-Lie subalgebra of $(\rho\Der(\A), [-,-]_\rho)$.\par 
 Our attitude is that is that an almost commutative $Q$-algebra is a `mild' noncommutative generalisation of a Q-manifold. That is, a supermanifold equipped with a Grassmann odd vector field that squares to zero. Various classical structures in classical differential geometry can be encoded efficiently by employing Q-manifolds, usually with some additional grading and extra structure such as a symplectic form. Our motivating examples are classical Lie algebroids, though we mention that Courant algebroids can also be understood in such terms following Roytenberg \cite{Roytenberg:2002}. In physics, Q-manifolds are essential in the BV-BRST formalism of gauge theories, and in particular the AKSZ construction \cite{Aleksandrov:1997}.\par 
 As noted by  Bongaarts \& Pijls \cite{Bongaarts:1994} almost commutativity is `close enough' to commutativity to rather directly mimic the standard construction of the  Cartan calculus on a smooth manifold. Moreover, the almost commutativity determines everything else: there are no real choices to be made here. We will briefly recall the key elements of the constructions here directing the reader to the cited literature for further details.\par 
We will write the left action of $\rho\Der(\A)$ on $\A$ as $(X, f ) \mapsto \langle X | f\rangle$.  Similarly for $p$-linear maps $\alpha_p : \times^p (\rho\Der(\A)) \rightarrow A$, we write $(X_1, X_2, \cdots , X_p, \alpha_p)\mapsto \langle X_1 , X_2, \cdots, X_p| \alpha_p\rangle $. Differential $p$-forms are then defined in `almost' the same way as the classical case. First, the space of zero forms is defined as $\Omega^0(\A) := \A$. Then for $p\geq 1$, $\Omega^p(\A)$ are defined as the $G$-graded spaces of $\rho$-alternating $p$-linear maps (in the sense of $\A$-modules). To spell this structure out,  we have linearity
$$\langle f X_1, X_2, \cdots , X_p | \alpha_p \rangle = f \langle  X_1, X_2, \cdots , X_p | \alpha_p \rangle,$$
and the $\rho$-antisymmetry
$$\langle  X_1, X_2, \cdots , X_i , X_{i+1}, \cdots      ,X_p | \alpha_p \rangle =   {-}  \rho(|X_i| , |X_{i+1}|)  \: \langle  X_1, X_2, \cdots , X_{i+1},X_i, \cdots      ,X_p | \alpha_p \rangle.$$
In this was we obtain a $G$-graded right $\A$-module with 
$$|\alpha_p| = |\langle  X_1, X_2, \cdots , X_p | \alpha_p \rangle| - (|X_1| + |X_2| + \cdots + |X_p|),$$
and the right action being 
$$\langle  X_1, X_2, \cdots , X_p | \alpha_p  \: f \rangle = \langle  X_1, X_2, \cdots , X_p | \alpha_p \rangle \: f. $$
 Naturally, the direct sum $\Omega^\bullet(\A) = \bigoplus_{p =0}^\infty \Omega^p(\A)$ is a $G$-graded $\A$-module.\par 
 Just as in the classical case, we have the de Rham differential $\rmd$, and for any  $X \in \rho\Der(\A)$ we have the interior product $i_X$ and Lie derivative $L_X$ all acting on $\Omega^\bullet(\A)$. The definitions of these operators are `almost' the same as the classical definitions.\par 
 The \emph{de Rham differential} is the linear map $\rmd : \Omega^p(\A) \rightarrow \Omega^{p+1}(\A)$, defined as $\langle X| \rmd f\rangle  = X(f)$ for $f \in \Omega^{0}(\A) = \A$, and for $p \geq 1$ it is defined as
\begin{align*}
\langle X_1, X_2, \cdots , X_{p+1} | \rmd \alpha_p \rangle & := \sum_{j=1}^{p+1} (-1)^{j-1} \rho\big(\sum_{l=1}^{j-1}|X_l|, |X_j|  \big)X_j\big( \langle X_1, \cdots, \widehat{X_j}, \cdots, X_{p+1}  |  \alpha_p\rangle \big)\\
& + \sum_{l<j<k \leq p+1} (-1)^{j+k}\rho \big(\sum_{l=1}^{j-1}|X_l|, |X_j|  \big) \: \rho \big(\sum_{l=1}^{j-1}|X_l|, |X_k|  \big) \\
 & \times  \rho \big(\sum_{l=j+1}^{k+1}|X_l|, |X_k|  \big) \: \langle [X_j, X_k]_\rho, X_1, \cdots , \widehat{X_j}, \cdots , \widehat{X_k}, \cdots , X_{p+1} | \alpha_p\rangle. 
\end{align*}
 Given any  $X \in \rho\Der(\A)$, the \emph{interior derivative} is defined as
 $$\langle X_1, \cdots , X_{p-1} | i_X\alpha_p  \rangle := \rho\big( \sum_{l=1}^{p-1} |X_l| , |X| \big) \: \langle X, X_1, \cdots , X_{p-1} | \alpha_p  \rangle,  $$
 with $i_X f =0$, and the \emph{Lie derivative} is defined as 
\begin{align*}
\langle X_1, \cdots , X_p | L_X \alpha_p  \rangle & := \rho\big( \sum_{l=1}^p  |X_l|, |X|\big)  \: X\big( \langle X_1, \cdots , X_p | \alpha_p  \rangle  \big)  {-} \sum_{j=1}^p \rho\big( \sum_{l=j}^p  |X_l|, |X|\big)\\
 & \times \langle  X_1. \cdots [X, X_j]_\rho,\cdots, X_p | \alpha_p   \rangle .
\end{align*}
The space $\Omega^\bullet(\A)$ carries a natural algebraic structure in `almost' the same way as the differential forms on a manifold do. The most direct way to see this is to note that $\Omega^\bullet(\A)$ is  $G'$-graded, where $G' := \Z\times G$. Any ($G$-homogeneous) $p$-form has $G'$-degree $|\alpha_p|' := (p, |\alpha_p|)$. We then define the cocycle $\rho' : G'\times G' \rightarrow \R $ as $\rho'((p,a) , (q, b)) :=  (-1)^{pq} \rho(a,b)$. The space of differential forms $\Omega^\bullet(\A)$ is then a $\rho'$-commutative $G'$-algebra.\par 
Note that $\rmd$, $i_X$ and $L_X$ are $\rho'$-derivations on $\Omega^\bullet(\A)$ of $G'$-degrees $|\rmd|' = (1,0)$, $|i_X|' = (-1, |X|)$ and $|L_X|' =( 0, |X|)$. It can be shown that the \emph{Cartan identities} generalise to the current situation in the expected way:

\begin{align}\label{eqn:CartanCalc}
 & [\rmd, \rmd]_{\rho'} = 0 , &&  [\rmd, i_X]_{\rho'} = L_X, & [\rmd, L_X]_{\rho'}=0,\\
\nonumber  & [i_X, i_Y]_{\rho'} = 0, && [L_X, i_Y]_{\rho'} = i_{[X,Y]_\rho},& [L_X, L_Y]_{\rho' } = L_{[X,Y]_\rho}.
\end{align}
Observe that given any  $\rho$-commutative algebra $\A$, the de Rham complex $(\Omega^\bullet(\A) ,\rmd) $ is an almost commutative Q-algebra (see Definition \ref{def:almostCommQ}). This is clear as $\Omega^\bullet(\A)$ is a $\rho'$-commutative $G'$-algebra, $\# \rmd = 1$, and the first of the Cartan identities can be written as $\rmd^2 = \half [\rmd, \rmd]_{\rho'}=0$.
\begin{remark}
The notion of an almost Q-algebra and in particular the  example $(\Omega^\bullet(\A) ,\rmd) $ is more general than what one usually means by a first-order differential calculus (cf. Woronowicz \cite{Woronowicz:1987}).  Specifically, there is no general condition that everything is generated by one-forms that are of the form $\alpha = \sum_k\rmd g_k \: f_k$ for $f_k$ and $g_k \in \A$.
\end{remark}

\section{Almost Commutative Non-negatively Graded Algebras}\label{Sec:AlComNGAlg}
Graded manifolds and particularly non-negatively graded  manifolds provide an economic framework to encode various classical structures like Lie algebroids, Courant algebroids, Dirac structures and so on (see for example \cite{Roytenberg:2002,Severa:2005,Severa:2017,Voronov:2002}). In the current setting, will be interested in almost commutative algebras that have a compatible $\N$-grading, which we refer to as \emph{weight}. The basic idea is that we wish to algebraically capture the important features of the \emph{polynomial algebra} on a graded bundle (see \cite{Voronov:2002} and for a review of the theory of graded bundles may consult \cite{Bruce:2017}). \par
Consider an $(\N, G)$-bigraded algebra 
$$\A =  \bigoplus_{\stackrel{a \in G}{i \in \N}} \A_{(i, a)}.$$
As a bigraded algebra we have $\A_{(i,a)} \cdot \A_{(j ,b)} \subset \A_{(i+j,a +b)}$. We will denote the $G$-degree of a homogeneous element $f \in \A$ as before, i.e., $|f| \in G$. The weight we will denote by  $\widetilde{f}$. Let $\rho$ be a $G$-cocycle. We will insist that the commutation rules be determined by the $G$-degree (via $\rho$) and that the weight is independent of the $\rho$-commutative property. Their r\^{o}les in the geometry we develop will be quite different. This leads us to the following definition.
\begin{definition}
An $(\N, G)$-bigraded algebra $\A$ is said  to be an \emph{almost commutative non-negatively graded algebra} if and only if it is a $\rho$-commutative algebra. 
\end{definition}
\begin{remark}
Naturally, we could allow a $\Z$-grading instead of just an $\N$-grading. In physics, an example of such a grading would be `ghost number'. The important thing here is that the $\N$ or $\Z$-grading plays a very different r\^{o}le to the $\Z_2$-grading in the BV-BRST formalism. Likewise for  almost commutative non-negatively graded algebra the $\N$-grading and $G$-grading have very different meanings. Roughly, the $\N$-grading will encode a `graded bundle' structure, while the $G$-grading encodes the noncommutativity of the geometry.
\end{remark}
We need to understand what it means for the `weight to be bounded'. In the classical setting of non-negatively graded manifolds this means that the weight of the local coordinates is bounded from above. We mimic this in the following way. First, we define 
$$\A_i :=  \bigoplus_{a \in G} \A_{(i,a)}.$$
Then we define $\textnormal{Alg}\big( \A_i \big)$ as the $\rho$-commutative algebra generated by elements in $\A_0 \oplus \A_1 \oplus \cdots \oplus \A_i$.  Naturally, this is itself an $(\N, G)$-bigraded algebra and an almost commutative  subalgebra of $\A$. We can then consider the following set
$$\A_{\langle i \rangle}:=  \textnormal{Alg}\big( \A_{i-1} \big) \cap \A_i.$$
In words, this is the set of weight $i$ elements of $\A$ that are built from elements of weight less that $i$. We then make the following definition.
\begin{definition}
An almost commutative non-negatively graded algebra $\A$ is said to be of \emph{degree} $n \in \N$ if and only if
$$\A_i\slash \A_{\langle i \rangle} = \{ 0\},$$
for all $i > n$.
\end{definition}
As it stands, our definition of the degree means that if $\A$ is of degree $n$, then it is automatically also of degree $n+1$. However, by degree, we will be explicitly be referring to the \emph{minimal degree} from this point on, i.e, the lowest possible degree.\par 
The space of $\rho$-derivations of an almost commutative non-negatively graded algebra is  $\Z$-graded. That is, we naturally have $\rho$-derivations that carry a negative degree with respect to the weight of the algebra.
\begin{lemma}
Let $\A$ be an almost commutative non-negatively graded algebra of degree $n$. Then the space of $\rho$-derivations of $\A$ is bounded from below at $-n$, i.e., there are no non-zero $\rho$-derivations of $\Z$-degree less than $-n$.
\end{lemma}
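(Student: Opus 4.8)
The plan is to work directly with the weight-grading on $\A$ and exploit the fact that, by the degree-$n$ hypothesis, the whole algebra is generated over $\A_0$ by elements of weight at most $n$. First I would record the following reformulation of the hypothesis: since $\A_i/\A_{\langle i\rangle} = \{0\}$ for all $i > n$, every homogeneous element of weight $i > n$ lies in $\textnormal{Alg}(\A_{i-1})$, and by an easy downward induction every homogeneous $f\in\A$ with $\widetilde f > n$ can be written as a (finite) sum of products of homogeneous elements each of weight $\leq n$. In particular $\A$ is generated as a $\rho$-commutative algebra by $\A_0\oplus\A_1\oplus\cdots\oplus\A_n$.

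Next, suppose $X\in\rho\Der(\A)$ is homogeneous of $\Z$-degree $k$ with respect to weight, i.e. $X\colon\A_{(i,a)}\to\A_{(i+k,a)}$ for all $i,a$; I want to show $k\geq -n$, or equivalently that if $k < -n$ then $X = 0$. Since a $\rho$-derivation is determined by its values on a generating set, it suffices to show $X$ vanishes on each generator, i.e. on every homogeneous $g$ with $\widetilde g \leq n$. But $X(g)$ has weight $\widetilde g + k \leq n + k < 0$, and $\A$ is non-negatively graded, so $\A_{(\widetilde g + k,\,\ast)} = \{0\}$; hence $X(g) = 0$. Because the $\rho$-derivation rule propagates this: if $X$ kills two homogeneous elements $f,g$ then $X(fg) = (Xf)g + \rho(|X|,|f|)\,f(Xg) = 0$, so the set of elements annihilated by $X$ is a $\rho$-commutative subalgebra; containing all generators, it is all of $\A$. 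Thus $X = 0$ for $k < -n$, which is exactly the claim. Finally, a general (not necessarily weight-homogeneous) $\rho$-derivation decomposes into weight-homogeneous components — each of which is again a $\rho$-derivation — and the bound applies componentwise, so there are no nonzero $\rho$-derivations of $\Z$-degree below $-n$.

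The only genuine content to check carefully is the reduction "a $\rho$-derivation that vanishes on a generating set vanishes identically," which rests on the fact that $\ker X$ (as a subspace) is closed under the product whenever $X$ is a $\rho$-derivation — and this is immediate from the $\rho$-derivation rule as displayed above. The other point needing a line of justification is that the weight-homogeneous components of a $\rho$-derivation are themselves $\rho$-derivations; this follows by comparing weights on both sides of $X(fg) = (Xf)g + \rho(|X|,|f|)f(Xg)$ for weight-homogeneous $f,g$ and isolating the weight-$(\widetilde f + \widetilde g + k)$ part. I do not anticipate a real obstacle here: the statement is essentially the observation that a derivation cannot lower weight below the minimal weight of a generating set, and the non-negativity of the grading does the rest. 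The mild subtlety — which is why I would spell it out rather than leave it to the reader — is making sure the generation statement (every element of weight $>n$ is a polynomial in elements of weight $\leq n$) is deduced cleanly from the definition of "degree $n$" by induction on weight, rather than assumed.
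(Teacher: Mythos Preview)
Your proposal is correct and follows essentially the same route as the paper's own proof: show that a $\rho$-derivation of weight below $-n$ must kill all elements of weight $\leq n$ (since the image would land in negative weight), then use the $\rho$-derivation rule together with the fact that such elements generate $\A$ to conclude it vanishes everywhere. Your version is simply more explicit --- you spell out the induction giving generation by $\A_0\oplus\cdots\oplus\A_n$, the closure of $\ker X$ under products, and the passage to weight-homogeneous components --- but the underlying argument is identical.
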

 \begin{proof}
 Suppose $X$ is a $\rho$-derivation of $\A$ of $\Z$-degree $-i$ and that $i>n$. Clearly, such a derivation will annihilate any element of $\A$ of weight $< i$. The only possible non-trivial action is on elements of weight $>i$. However, as the algebra we are considering is of degree $n$, elements of weight $>n$ are products of elements of weight $<n$. Thus, via the $\rho$-derivation rule, elements of weight $>n$ are also annihilated by $X$. We conclude that the lowest possible degree of a non-zero $\rho$-derivation is $-n$.
 \end{proof} 
 We can naturally decompose the $\rho$-Lie algebra of $\rho$-derivation of $\A$ (of degree $n$) as the sum of two  $\rho$-Lie subalgebras
$$\rho\Der(\A) = \rho\Der_{-}(\A) \oplus \rho\Der_{+}(\A), $$
 where
 \begin{align*}
 \rho\Der_{-}(\A) = \bigoplus^{i = -1}_{i = -n} \rho\Der_{i}(\A), &&
 && \rho\Der_{+}(\A) = \bigoplus_{i \geq 0} \rho\Der_{i}(\A)
 \end{align*}
 Some direct observations here are the following:
 \begin{enumerate}
 \item The  $\rho$-Lie algebra  $\rho\Der_{-}(\A)$ is nilpotent.
 \item  $\rho$-Lie algebra  $\rho\Der_{-n}(\A)$ is abelian.
 \end{enumerate}
 The situation here is exactly the same for vector fields on a non-negatively graded manifold, where the abelian group is simply $\Z_2$ and cocycle is the standard sign factor, see  Voronov \cite{Voronov:2010}.

\section{Almost Commutative $Q$-algebras and $\rho$-antibrackets}\label{Sec:QAlgBrack}
Taking Va\u{\i}ntrob \cite{Vaintrob:1997} as our cue, we make the following definition.
\begin{definition}\label{def:AlmostQdeg1}
Fix some abelian group $G$ and cocycle $\rho$. An \emph{almost commutative $Q$-algebra of degree $1$} is a pair $(\A, Q)$ consisting  of:
\begin{enumerate}
\item an almost commutative non-negatively graded algebra $\A$ of degree one;
\item a homological $\rho$-derivation of $\A$ of $\Z$-degree one.
\end{enumerate}
A \emph{morphism of almost commutative $Q$-algebras of degree $1$} is a bi-graded algebra morphism
$$\phi^\# : \A' \longrightarrow \A,$$
that relates the respective $\rho$-derivations, i.e.,
$$ \phi^\# \circ Q' = Q \circ \phi^\#.$$
\end{definition} 

The idea is that an almost commutative $Q$-algebra of degree $1$ should be a noncommutative version of a Lie algebroid  (or really a \emph{Lie antialgebroid}), or more carefully we have a noncommutative version of the Chevalley--Eilenberg complex associated with a Lie algebroid.  Our attitude is that this complex is the `true' fundamental definition of a Lie algebroid.
\begin{remark}
Schwarz \cite{Schwarz:1999,Schwarz:2003} defined a Q-algebra as a $\Z$ (or $\Z_2$) graded associative algebra $A$ equipped with a derivation of degree $1$ and  an element $\omega \in A$ of degree $2$ (the curvature) such that $Q^2(-) = [\omega , -]$. The algebras we define are closely related to Schwarz's Q-algebras when we replace the commutator with the $\rho$-commutator and realise that $[\omega, -]_\rho =0$. 
\end{remark}

We can modify the derived bracket construction of Kosmann-Schwarzbach \cite{Kosmann-Schwarzbach:2004} (also see Voronov \cite{Voronov:2005}) to the current setting of almost commutative algebras. For all of this section, we will specifically concentrate on the degree $1$ case without further explicit reference.\par 
We will change nomenclature slightly in order to fit with classical vector bundles. By the \emph{sections of $\A$}, we mean the $G$-graded vector space $\Sec(\A) := \rho\Der_{-}(\A)$. Recall that we can also consider this space as an abelian $\rho$-Lie algebra, this will be important in the constructions. Moreover,  $\Sec(\A)$ naturally has the structure of a left module over the $\rho$-commutative algebra $\B := \bigoplus_{a \in G} \A_{(0,a)}$. Geometrically we think of the algebra $\B$ as the algebra of function the `base almost commutative manifold' and $\A$ as the algebra of fibre-wise polynomials on the total space of  the `almost commutative vector bundle'. 
\begin{definition}
Let $(\A, Q)$ be an almost commutative $Q$-algebra of degree $1$. Then the \emph{derived $\rho$-antibracket} is the bilinear map
\begin{align*}
&\Sec(\A)\times \Sec(\A)  \longrightarrow \Sec(\A)\\
 & (\sigma, \psi)  \: \mapsto \SN{\sigma, \psi}_{Q} :=  \rho(|\sigma| +|Q|, |Q|)[[Q, \sigma]_\rho , \psi]_\rho.
\end{align*}
\end{definition} 
Let us now examine the properties of the derived $\rho$-antibracket.
\begin{theorem}\label{thm:rhoLieAlg}
Let $(\A, Q)$ be an almost commutative $Q$-algebra of degree $1$ and let $\SN{-,-}_Q$ denote the corresponding derived $\rho$-antibracket. The derived $\rho$-antibracket  exhibits the following properties:
\begin{enumerate}
\item $|\SN{\sigma, \psi}_Q| = |\sigma| + |\psi| + |Q| $;
\item $\SN{\sigma, \psi}_Q = {-} \rho(|\sigma| + |Q| , |\psi| + |Q|) \:  \SN{\psi ,\sigma}_Q$; 
\item $\SN{\sigma, \SN{\psi, \omega}_Q}_Q = \SN{\SN{\sigma,\psi}_Q , \omega}_Q + \rho(|\sigma| + |Q|, |\psi| + |Q|)\: \SN{\psi, \SN{\sigma, \omega}_Q}_Q$. 
\end{enumerate}
In short, $\Sec(\A)$, once equipped with the derived $\rho$-antibracket, is a $\rho$-Lie antialgebra (see Definition \ref{def:rhoAntiLieAlg}).
\end{theorem}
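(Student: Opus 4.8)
The plan is to verify the three listed properties directly from the definition $\SN{\sigma,\psi}_Q = \rho(|\sigma|+|Q|,|Q|)\,[[Q,\sigma]_\rho,\psi]_\rho$, using only the elementary facts about $\rho$-derivations and the $\rho$-commutator collected in Section~\ref{Sec:AlmComAlg}, together with the hypothesis $Q^2=0$. The key structural point, to be established first, is that $\SN{\sigma,\psi}_Q$ really does land in $\Sec(\A)=\rho\Der_-(\A)$: the operator $[Q,\sigma]_\rho$ is a $\rho$-derivation of $\Z$-degree $1+(-1)=0$, hence $[[Q,\sigma]_\rho,\psi]_\rho$ is a $\rho$-derivation of $\Z$-degree $-1$, which lies in $\Sec(\A)$ since the degree of $\A$ is $1$. (Here one uses that the $\rho$-commutator of two $\rho$-derivations is again a $\rho$-derivation, as noted after the definition of $\rho$-derivation.) This also immediately gives property~(i) on $G$-degrees once one checks that $|[Q,\sigma]_\rho| = |Q|+|\sigma|$ and $|[[Q,\sigma]_\rho,\psi]_\rho| = |Q|+|\sigma|+|\psi|$, which follow from $[\A_a,\A_b]_\rho\subset\A_{a+b}$ applied to the grading on operators; the scalar prefactor does not affect the $G$-degree.

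For property~(ii) (the $\rho$-skewsymmetry), I would start from $\SN{\sigma,\psi}_Q$ and use the $\rho$-Jacobi identity (Loday--Leibniz form, equivalently the graded antisymmetry of $[-,-]_\rho$) to move $\psi$ past $[Q,\sigma]_\rho$ and $\sigma$ past $Q$. Concretely, write $[[Q,\sigma]_\rho,\psi]_\rho$ in terms of $[Q,[\sigma,\psi]_\rho]_\rho$ and $[\sigma,[Q,\psi]_\rho]_\rho$ via the $\rho$-Jacobi identity; since $\Sec(\A)$ is an \emph{abelian} $\rho$-Lie algebra (emphasised in the text as being important here), $[\sigma,\psi]_\rho=0$, so $[[Q,\sigma]_\rho,\psi]_\rho = \pm\rho(\cdots)[\sigma,[Q,\psi]_\rho]_\rho = \mp\rho(\cdots)\rho(\cdots)[[Q,\psi]_\rho,\sigma]_\rho$, where the $\pm$ and $\rho$-factors are bookkeeping from the Jacobi and antisymmetry identities. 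Collecting the commutation factors and comparing with $\rho(|\psi|+|Q|,|Q|)[[Q,\psi]_\rho,\sigma]_\rho$ should produce exactly the claimed factor $-\rho(|\sigma|+|Q|,|\psi|+|Q|)$, after simplification using the cocycle identities $\rho(a+b,c)=\rho(a,c)\rho(b,c)$, $\rho(a,b)\rho(b,a)=1$, and $\rho(|Q|,|Q|)=-1$ (which holds since $\#Q=1$).

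For property~(iii), the derived-bracket Jacobi/Leibniz identity, I would follow the standard Kosmann-Schwarzbach argument adapted to the $\rho$-setting. The mechanism is: the operator $\mathrm{d}_Q := [Q,-]_\rho$ satisfies $\mathrm{d}_Q^2 = \tfrac12[[Q,Q]_\rho,-]_\rho = 0$ because $[Q,Q]_\rho=0$ and $\mathrm{d}_Q$ is itself (up to sign) an inner $\rho$-derivation of the $\rho$-Lie algebra $\rho\Der(\A)$; moreover $\mathrm{d}_Q$ is a $\rho$-derivation of the bracket $[-,-]_\rho$ by the $\rho$-Jacobi identity. Then $\SN{\sigma,\psi}_Q$ is (up to the normalising scalar) $[\mathrm{d}_Q\sigma,\psi]_\rho$, and the Loday--Leibniz identity for $\SN{-,-}_Q$ is obtained by expanding $\SN{\sigma,\SN{\psi,\omega}_Q}_Q$, pushing $\mathrm{d}_Q$ through $[-,-]_\rho$ using its derivation property, using $\mathrm{d}_Q^2=0$ to kill one term, and using the $\rho$-Jacobi identity for $[-,-]_\rho$ itself to rearrange the rest; the abelianness $[\sigma,\omega]_\rho=0$ removes the would-be extra terms. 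Tracking the scalar prefactors $\rho(|\sigma|+|Q|,|Q|)$ through each step is where all the bookkeeping lives.

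The main obstacle is purely the sign/cocycle bookkeeping: there is no conceptual difficulty once the vanishing $[\sigma,\psi]_\rho=0$ on $\Sec(\A)$ and $Q^2=0$ are in hand, but getting the commutation factors to collapse to precisely $-\rho(|\sigma|+|Q|,|\psi|+|Q|)$ in (ii) and to $\rho(|\sigma|+|Q|,|\psi|+|Q|)$ as the coefficient of the last term in (iii) requires careful and repeated use of the cocycle relations and of $\rho(|Q|,|Q|)=-1$. One natural organising device is to note that $[Q,-]_\rho$ is a $\rho$-derivation of $G$-degree $|Q|$ and of odd $G$-parity, so the effective ``degree'' carried by $\SN{\sigma,-}_Q$ is $|\sigma|+|Q|$; checking that the prefactor $\rho(|\sigma|+|Q|,|Q|)$ is exactly the one that turns the raw derived bracket of an odd-degree homological derivation into a bracket satisfying Definition~\ref{def:rhoAntiLieAlg} with $|\mathfrak{g}|=|Q|$ is the content of the proof. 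Finally, the ``in short'' clause follows because (i) is condition~(ii) of Definition~\ref{def:rhoAntiLieAlg} with $|\mathfrak g| = |Q|$, condition~(i) there is $\rho(|Q|,|Q|)=-1$ (already known), (iii) above is condition~(iii) there, and (ii) above is condition~(iv), so $\Sec(\A)$ with $\SN{-,-}_Q$ is a $\rho$-Lie antialgebra of $G$-degree $|Q|$.
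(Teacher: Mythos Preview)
Your proposal is correct and follows essentially the same route as the paper: property~(i) by degree counting, property~(ii) via the $\rho$-Jacobi identity together with the abelianness $[\sigma,\psi]_\rho=0$ on $\Sec(\A)$ and the simplification $\rho(|Q|,|Q|)=-1$, and property~(iii) by repeated application of the $\rho$-Jacobi identity with the term containing $[Q,Q]_\rho$ vanishing. Your preliminary check that $\SN{\sigma,\psi}_Q$ actually lands in $\Sec(\A)$ is a welcome addition that the paper leaves implicit, and your packaging of~(iii) through the operator $\mathrm{d}_Q=[Q,-]_\rho$ is a mild conceptual rephrasing of the same computation the paper carries out explicitly.
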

\begin{proof}These statements follow directly via calculation in almost exactly the same way as the classical $\Z$ or $\Z_2$-graded case. 
\begin{enumerate}
\item This follows directly from the definition and the properties of $G$-degree.
\item This follows from the $\rho$-Jacobi identity  and the $\rho$-skewsymmetry for the $\rho$-commutator.  Specifically, it is easy to see that the $\rho$-Jacobi identity together with the $\rho$-skewsymmetry  imply that
$$[Q,[\sigma, \psi]_\rho]_\rho = [[Q, \sigma]_\rho, \psi]_\rho + \rho(|\sigma|, |\psi|) \: [[Q,\psi], \sigma].$$
Because we are dealing with an abelian $\rho$-Lie subalgebra the left-hand side of the above vanishes and we can thus write
$$\SN{\sigma, \psi}_Q  = \rho(|\sigma| + |Q|, |Q|) \rho(|\sigma|, |\psi|) \rho(|Q| , |\psi| + |Q|) \SN{\psi, \sigma}.$$
Now using the properties of the cocycle $\rho$ we arrive at
$$\SN{\sigma, \psi}_Q  =  \rho(|Q|, |Q|)\rho(|\sigma| + |Q|,  |\psi|+|Q|)  \SN{\psi, \sigma}_Q,$$
and then as $\rho(|Q|, |Q|) =-1$, we obtain the required symmetry. 
\item Directly from the definitions we have
$$\SN{\sigma, \SN{\psi, \omega}_Q}_Q = \rho(|\sigma | + |\psi| , |Q|)\:  [[Q, \sigma]_\rho, [  [Q, \psi]_\rho ] , \omega ]_\rho;$$
we then make use of the $\rho$-Jacobi identity for the $\rho$-commutator to obtain
\begin{align*} &= && \rho(|\sigma| + |\psi|, |Q|) \: [[[Q,\sigma]_\rho , [Q,\psi]_\rho  ]_\rho, \omega  ]_\rho \\
&  && +\rho(|\sigma| + |\psi|, |Q|) \rho(|\sigma| + |Q| , |\psi| + |Q|)[[Q,\psi]_\rho,[  [Q,\sigma]_\rho ,\omega]_\rho   ]_\rho;
\end{align*}
and again using the $\rho$-Jacobi identity for the $\rho$-commutator we arrive at
\begin{align*} &= && \rho(|\sigma| + |\psi|, |Q|) \: [[[[Q,\sigma]_\rho,Q]_\rho, \psi]_\rho,\omega]_\rho \\
& && + \rho(|\sigma| + |\psi|, |Q|) \rho(|\sigma|+ |Q|, |Q|)\: [[Q,[[Q,\sigma]_\rho,\psi]_\rho]_\rho , \omega]_\rho \\
&  && +\rho(|\sigma| + |\psi|, |Q|) \rho(|\sigma| + |Q| , |\psi| + |Q|)[[Q,\psi]_\rho,[  [Q,\sigma]_\rho ,\omega]_\rho   ]_\rho.
\end{align*}
Using the definition of the derived $\rho$-antibracket and once more the $\rho$-Jacobi identity for the $\rho$-commutator we finally obtain
\begin{align*}
\SN{\sigma, \SN{\psi, \omega}_Q}_Q   =  & \SN{\SN{\sigma,\psi}_Q , \omega}_Q + \rho(|\sigma| + |Q|, |\psi| + |Q|)\: \SN{\psi, \SN{\sigma, \omega}_Q}_Q\\
 & {-} \frac{1}{2} \rho(|\sigma| + |\psi| + |Q|)\rho(|\sigma| , |Q|)\: [[[[Q,Q]_\rho, \sigma ]_\rho , \psi]_\rho, \omega ]_\rho.
\end{align*}
Then as $[Q,Q]_\rho =0$ we obtain the required result.
\end{enumerate}
Putting the above together, and using the fact that $\rho(|Q|,|Q|) ={-}1$ we see that we do indeed have the structure of a $\rho$-Lie antialgebra.
\end{proof}
We also have a module left module structure of $\Sec(\A)$ to take into account. Following the classical picture of Lie algebroids, we define an anchor map in the following way.
\begin{definition}
Let $(\A, Q)$ be an almost commutative $Q$-algebra of degree $1$. The associated \emph{anchor map} is the morphism of (left) $\B$-modules
$$\mathrm{a}_Q  : \Sec(\A) \longrightarrow \rho\Der(\B),$$
defined by
$$\mathrm{a}_Q(\sigma)f :=  \rho(|\sigma| + |Q| , |Q|) [Q, \sigma](f) = \sigma(Q(f)), $$
for all $\sigma \in \Sec(\A)$ and $f \in \B$.
\end{definition}
 To be clear, we have that 
 $$\mathrm{a}_Q(f \sigma) = f  \: \mathrm{a}_Q(\sigma),$$
for all $\sigma \in \Sec(\A)$ and $f \in \B$.\par 
A direct calculation establishes the following result.
\begin{proposition}
Let $(\A, Q)$ be an almost commutative $Q$-algebra of degree $1$. Furthermore let $\SN{-,-}_Q$ and $\mathrm{a}_Q$ denote the associated derived $\rho$-antibracket and anchor map, respectively. The derived $\rho$-antibracket satisfies the following $\rho$-Leibniz rule:
$$\SN{\sigma, f \: \psi}_Q = \mathrm{a}_Q(\sigma)f  +  \rho(|\sigma| + |Q|, |f| ) \: f \: \SN{\sigma, \psi}_Q.$$
 \end{proposition}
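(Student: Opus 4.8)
The plan is to expand the left-hand side using the definition of the derived $\rho$-antibracket, push the function $f$ through the two nested $\rho$-commutators using the $\rho$-Leibniz rule \eqref{eq:LeibnizRuleLie} for $\rho$-derivations, and identify the two resulting terms with the anchor term and the $f$-weighted bracket term respectively. Concretely, write
$$\SN{\sigma, f\psi}_Q = \rho(|\sigma|+|Q|,|Q|)\,[[Q,\sigma]_\rho, f\psi]_\rho,$$
and apply \eqref{eq:LeibnizRuleLie} with $X = [Q,\sigma]_\rho$ (which has $G$-degree $|\sigma|+|Q|$, since $Q$ has $\Z$-degree one but $G$-degree $|Q|$) and $Y=\psi$. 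This splits the inner commutator into $[Q,\sigma]_\rho(f)\,\psi + \rho(|\sigma|+|Q|,|f|)\, f\,[[Q,\sigma]_\rho,\psi]_\rho$. The second piece, after reinstating the prefactor $\rho(|\sigma|+|Q|,|Q|)$ and commuting it past $\rho(|\sigma|+|Q|,|f|)$ and $f$ (the cocycle scalars commute freely and $f$ is a scalar-like multiplier on sections via the left $\B$-module structure), is exactly $\rho(|\sigma|+|Q|,|f|)\, f\,\SN{\sigma,\psi}_Q$, which is the desired second term.

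For the first piece, the prefactor times $[Q,\sigma]_\rho(f)\,\psi$ must be shown to equal $\mathrm{a}_Q(\sigma)f \cdot \psi$. This is immediate from the definition of the anchor map: $\mathrm{a}_Q(\sigma)f := \rho(|\sigma|+|Q|,|Q|)\,[Q,\sigma]_\rho(f) = \sigma(Q(f))$, so $\rho(|\sigma|+|Q|,|Q|)\,[Q,\sigma]_\rho(f)\,\psi = \mathrm{a}_Q(\sigma)f\cdot\psi$. One should note that $\mathrm{a}_Q(\sigma)f \in \B$ acts on $\psi \in \Sec(\A)$ via the left $\B$-module structure; writing $\mathrm{a}_Q(\sigma)f$ in the formula is shorthand for $\big(\mathrm{a}_Q(\sigma)f\big)\psi$, exactly paralleling the classical Lie algebroid Leibniz rule where the anchor term $\rho(\sigma)(f)\cdot\psi$ is understood similarly.

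The only genuine point requiring care — and the step I expect to be the main obstacle — is bookkeeping of the $G$-degrees so that the cocycle factor coming out of \eqref{eq:LeibnizRuleLie} is $\rho(|\sigma|+|Q|,|f|)$ and not something else. Here one uses that $[Q,\sigma]_\rho$ is a $\rho$-derivation of $\B$ (more precisely of $\A$, restricting to $\B$) of $G$-degree $|[Q,\sigma]_\rho| = |Q|+|\sigma|$, which follows because the $\rho$-commutator of $\rho$-derivations is again a $\rho$-derivation with additive $G$-degree. One also needs $[Q,\sigma]_\rho$ to indeed map $f\psi$ sensibly: since $Q$ raises weight by one and $\sigma\in\Sec(\A)=\rho\Der_-(\A)$ lowers it, $[Q,\sigma]_\rho$ has $\Z$-degree $0$, hence preserves $\Sec(\A)$ and preserves $\B$, so all expressions land where claimed. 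With these degree identifications in hand, the computation is a direct two-line application of \eqref{eq:LeibnizRuleLie} followed by substitution of the anchor definition, completing the proof.
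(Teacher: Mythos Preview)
Your proposal is correct and is precisely the ``direct calculation'' the paper alludes to but does not spell out: expand the definition of the derived $\rho$-antibracket, apply the $\rho$-Leibniz rule \eqref{eq:LeibnizRuleLie} with $X=[Q,\sigma]_\rho$ of $G$-degree $|Q|+|\sigma|$, and identify the two resulting terms with the anchor term and the $f$-weighted bracket term. Your observation that the displayed formula really means $\big(\mathrm{a}_Q(\sigma)f\big)\psi$ on the right-hand side is also correct and worth noting.
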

 For Lie algebroids, it is well known that the compatibility of the anchor map and the bracket follow from the Jacobi identity.  That is, the anchor map is a morphism of Lie algebras. The situation in the current context is essentially identical: the proof follows in exactly the same way as the classical case. For computational ease, we define the $\rho$-Jacobiator as the mapping
 $$J_Q: \Sec(\A) \times \Sec(\A) \times \Sec(\A) \longrightarrow \Sec(\A),$$
 given by
 $$J_Q(\sigma, \psi, \omega) = \SN{\sigma, \SN{ \psi, \omega}_Q }_Q  - \SN{\SN{\sigma, \psi}_Q, \omega}_Q  - \rho(|\sigma| + |Q|, |\psi| + |Q|) \SN{\psi, \SN{\sigma, \omega}_Q}_Q.$$
Naturally, the  $\rho$-Jacobiator vanishes in our case as $Q^2 =0$. 
\begin{proposition}\label{prop:ancbracketcompat}
Let $(\A, Q)$ be an almost commutative $Q$-algebra of degree $1$. Furthermore let $\SN{-,-}_Q$ and $\mathrm{a}_Q$ denote the associated derived $\rho$-antibracket and anchor map, respectively.  Then the derived $\rho$-antibracket and anchor map are compatible in the sense that
$$\mathrm{a}_Q\big( \SN{\sigma, \psi}_Q\big) = \big[ \mathrm{a}_Q(\sigma) , \mathrm{a}_Q(\psi) \big]_\rho,$$
for all $\sigma$ and $\psi \in \Sec(\A)$
\end{proposition}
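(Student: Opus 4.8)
The plan is to reduce the anchor compatibility to the already-established Loday--Leibniz identity for the derived $\rho$-antibracket together with the defining property $\mathrm{a}_Q(\sigma)f = \sigma(Q(f)) = \rho(|\sigma|+|Q|,|Q|)[Q,\sigma]_\rho(f)$. First I would fix homogeneous $\sigma, \psi \in \Sec(\A)$ and $f \in \B$, and evaluate $\mathrm{a}_Q\big(\SN{\sigma,\psi}_Q\big)f$ by definition; this is $\rho\big(|\SN{\sigma,\psi}_Q| + |Q|, |Q|\big)\,[Q, \SN{\sigma,\psi}_Q]_\rho(f)$, and using property (i) of Theorem \ref{thm:rhoLieAlg} the degree prefactor becomes $\rho(|\sigma|+|\psi|+2|Q|, |Q|)$. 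Now I would expand $[Q, \SN{\sigma,\psi}_Q]_\rho = \rho(|\sigma|+|Q|,|Q|)\,[Q, [[Q,\sigma]_\rho,\psi]_\rho]_\rho$ and apply the $\rho$-Jacobi identity (Loday form) for the $\rho$-commutator to split it into $[[Q,[Q,\sigma]_\rho]_\rho, \psi]_\rho$ plus a cocycle multiple of $[[Q,\sigma]_\rho, [Q,\psi]_\rho]_\rho$. The first term contains $[Q,[Q,\sigma]_\rho]_\rho = [[Q,Q]_\rho,\sigma]_\rho/1$-type expression which vanishes since $[Q,Q]_\rho = 0$ (more precisely, $[Q,[Q,\sigma]_\rho]_\rho = \frac12[[Q,Q]_\rho,\sigma]_\rho = 0$ by the $\rho$-Jacobi identity applied to $Q,Q,\sigma$), leaving only the symmetric term in $[Q,\sigma]_\rho$ and $[Q,\psi]_\rho$.

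Next I would compute the right-hand side $\big[\mathrm{a}_Q(\sigma), \mathrm{a}_Q(\psi)\big]_\rho(f)$. Since $\mathrm{a}_Q(\sigma) = \rho(|\sigma|+|Q|,|Q|)[Q,\sigma]_\rho$ as a $\rho$-derivation of $\B$ (here one must check that $[Q,\sigma]_\rho$ indeed preserves $\B = \A_{(0,\bullet)}$ — which holds because $Q$ raises weight by $1$ and $\sigma$ lowers it by $1$, so $[Q,\sigma]_\rho$ has weight $0$), the $\rho$-commutator of the two anchor images is $\rho(|\sigma|+|Q|,|Q|)\,\rho(|\psi|+|Q|,|Q|)\,[[Q,\sigma]_\rho, [Q,\psi]_\rho]_\rho$ acting on $f$, where I use that the $\rho$-commutator of $\rho$-derivations is again a $\rho$-derivation and that scalar-degree bookkeeping goes through. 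Then I would match the two cocycle prefactors using the multiplicativity relations $\rho(a+b,c) = \rho(a,c)\rho(b,c)$, $\rho(a,b)\rho(b,a) = 1$, and $\rho(|Q|,|Q|) = -1$; the factor $\rho(|\sigma|+|\psi|+2|Q|,|Q|)\cdot\rho(|\sigma|+|Q|,|Q|)$ from the left side should collapse to exactly $\rho(|\sigma|+|Q|,|Q|)\rho(|\psi|+|Q|,|Q|)$ on the right, up to the sign coming from the Jacobi rearrangement, and everything is consistent.

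Alternatively — and this is probably cleaner to write — I would deduce the identity from the already-proven $\rho$-Leibniz rule $\SN{\sigma, f\psi}_Q = \mathrm{a}_Q(\sigma)f + \rho(|\sigma|+|Q|,|f|)\,f\,\SN{\sigma,\psi}_Q$ together with property (iii) of Theorem \ref{thm:rhoLieAlg}. Namely, I would apply (iii) to the triple $(\sigma,\psi,f\omega)$, use the Leibniz rule on each of the four brackets involving $f$, and collect terms: the terms where $\mathrm{a}_Q$ hits $\omega$ reconstruct $\SN{\cdots}_Q$-terms that cancel by (iii) applied to $(\sigma,\psi,\omega)$, while the terms where the anchor operators act on $f$ yield precisely $\mathrm{a}_Q\big(\SN{\sigma,\psi}_Q\big)f = \big[\mathrm{a}_Q(\sigma),\mathrm{a}_Q(\psi)\big]_\rho f$; since this holds for all $f \in \B$, the operator identity follows.

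The main obstacle is bookkeeping of the $\rho$-cocycle factors: one has to be scrupulously careful that every appeal to the $\rho$-Jacobi identity carries the correct prefactor and that the degree of $[Q,\sigma]_\rho$ (namely $|\sigma|+|Q|$) is substituted consistently everywhere, since a single misplaced $\rho(a,b)$ versus $\rho(b,a)$ will break the match. A secondary, more conceptual point to verify is that $\mathrm{a}_Q(\sigma)$ genuinely lands in $\rho\Der(\B)$ and not merely in some space of maps $\B \to \A$; this is where the weight grading (degree $1$) is used, and it should be remarked explicitly. Given these checks, the computation is mechanical and mirrors the classical Lie algebroid case verbatim.
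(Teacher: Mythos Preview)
Your proposal is correct. Your second (``alternative'') approach is exactly the paper's proof: the paper expands the $\rho$-Jacobiator $J_Q(\sigma,\psi,f\omega)$ via the $\rho$-Leibniz rule, obtaining the anchor-compatibility relation as the coefficient of $\omega$ once both instances of $J_Q$ are set to zero.

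Your first approach --- unwinding $\mathrm{a}_Q(\SN{\sigma,\psi}_Q)f$ directly through the $\rho$-Jacobi identity for the commutator and matching with $[\mathrm{a}_Q(\sigma),\mathrm{a}_Q(\psi)]_\rho(f)$ --- is a genuine, slightly more economical alternative: it works entirely inside $\rho\Der(\A)$ acting on $\B$, avoids introducing an auxiliary section $\omega$, and does not rely on the already-proved Jacobi identity (iii) of Theorem~\ref{thm:rhoLieAlg} (only on $[Q,Q]_\rho=0$). The paper's route, by contrast, is the standard Lie-algebroid argument that extracts anchor compatibility as a \emph{consequence} of the Jacobi identity plus the Leibniz rule; it is conceptually tidier because it shows the compatibility is not an independent axiom, at the cost of one more layer of expansion. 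Both approaches require the same cocycle bookkeeping you flag, and your remark that $[Q,\sigma]_\rho$ has weight $0$ (so $\mathrm{a}_Q(\sigma)$ really lands in $\rho\Der(\B)$) is a point the paper leaves implicit.
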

\begin{proof}
After short computation,  we see that
\begin{align*}
& J_Q(\sigma , \psi, f \omega) && = \mathrm{a}_Q(\sigma) \big(\mathrm{a}_Q(\psi)f \big)\omega  {-} \rho(|\sigma| + |Q| , |\psi| + |Q|) \mathrm{a}_Q(\psi) \big(\mathrm{a}_Q(\sigma)f \big)\omega  {-} \big(\mathrm{a}_Q\big( \SN{\sigma, \psi}_Q\big)f \big) \omega\\  
& && + \rho(|\sigma|  + |\psi| + |Q|, |f|) \rho(|Q|, |f|) f \: J_Q(\sigma, \psi, \omega),
\end{align*}
for all $\sigma, \psi$ and $\omega \in \Sec(\A)$ and $f \in \B$. Now simply using the fact that the $\rho$-Jacobiator for the derived $\rho$-antibracket vanishes established the result.
\end{proof}
We see that in analogy with the classical theory of Lie algebroids, which provides a framework for foliations of manifolds,  almost commutative Q-algebras of degree $1$  provide a class of foliations in almost commutative algebras.
\begin{corollary}
The image of the anchor map $\textnormal{Im}\:\mathrm{a}_Q \subset \rho\Der(\B)$, is a foliation in $\B$, see Definition \ref{def:foliation}.
\end{corollary}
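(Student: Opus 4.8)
The plan is to verify the two conditions in Definition \ref{def:foliation}, namely that $\textnormal{Im}\:\mathrm{a}_Q$ is a left $\B$-submodule of $\rho\Der(\B)$ and that it is closed under the $\rho$-commutator. First I would check the submodule property. Given two elements $\mathrm{a}_Q(\sigma), \mathrm{a}_Q(\psi)$ in the image, their sum is $\mathrm{a}_Q(\sigma + \psi)$ since $\mathrm{a}_Q$ is $\mathbb{K}$-linear (the bracket $[Q,-]_\rho$ is $\mathbb{K}$-linear and so is the commutator), hence again in the image. For stability under the $\B$-action, one uses the already-established identity $\mathrm{a}_Q(f\sigma) = f\:\mathrm{a}_Q(\sigma)$ for $f \in \B$ and $\sigma \in \Sec(\A)$: thus $f\:\mathrm{a}_Q(\sigma) = \mathrm{a}_Q(f\sigma)$ lies in $\textnormal{Im}\:\mathrm{a}_Q$, provided $f\sigma \in \Sec(\A)$, which holds because $\Sec(\A) = \rho\Der_{-}(\A)$ is a left $\B$-module as noted just before Theorem \ref{thm:rhoLieAlg}.

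Second I would check closure under the $\rho$-Lie bracket. This is exactly the content of Proposition \ref{prop:ancbracketcompat}: for $\sigma, \psi \in \Sec(\A)$ we have
$$\big[ \mathrm{a}_Q(\sigma) , \mathrm{a}_Q(\psi) \big]_\rho = \mathrm{a}_Q\big( \SN{\sigma, \psi}_Q\big),$$
and since $\SN{\sigma,\psi}_Q \in \Sec(\A)$ (the derived $\rho$-antibracket takes values in $\Sec(\A)$, by the definition of $\SN{-,-}_Q$ and Theorem \ref{thm:rhoLieAlg}), the right-hand side belongs to $\textnormal{Im}\:\mathrm{a}_Q$. Hence the $\rho$-commutator of any two elements of the image again lies in the image, so $\textnormal{Im}\:\mathrm{a}_Q$ is a $\rho$-Lie subalgebra of $\rho\Der(\B)$.

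Combining the two verifications, $\textnormal{Im}\:\mathrm{a}_Q$ satisfies both (i) and (ii) of Definition \ref{def:foliation}, so $(\B, \textnormal{Im}\:\mathrm{a}_Q)$ is a foliated almost commutative algebra, which is the assertion. There is no real obstacle here: the corollary is essentially a repackaging of the $\B$-linearity of the anchor map together with Proposition \ref{prop:ancbracketcompat}. The only point that deserves a moment's care is confirming that the relevant objects land in the right spaces — that $f\sigma$ stays in $\rho\Der_{-}(\A)$ and that $\SN{\sigma,\psi}_Q$ does too — but both facts are recorded earlier in the excerpt.
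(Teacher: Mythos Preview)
Your proposal is correct and matches the paper's approach: the corollary is stated without proof immediately after Proposition \ref{prop:ancbracketcompat}, precisely because it follows at once from the $\B$-linearity of $\mathrm{a}_Q$ together with that proposition, exactly as you spell out. There is nothing to add.
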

\begin{definition}
The \emph{characteristic foliation}  of  an almost commutative $Q$-algebra of degree $1$ is the foliated almost commutative algebra $(\B,\textnormal{Im}\:\mathrm{a}_Q )$.
\end{definition}

\noindent \textbf{Statement:} Given the initial data of an almost commutative Q-algebra of degree $1$, $(\A, Q)$, we canonically have the structure of a $\rho$-Lie antialgebra on the space of sections of $\A$ (Theorem \ref{thm:rhoLieAlg}), together with an anchor map that is compatible with the   derived $\rho$-antibracket (Proposition \ref{prop:ancbracketcompat}). Up to our conventions with grading, the resulting derived structures are an almost commutative version of a Lie algebroid structure (cf. Ngakeu \cite{Ngakeu:2017} who uses a different choice of grading).
\begin{example}[Trivial structures]
Any almost commutative non-negatively graded algebra of degree $1$ can be equipped with the trivial homological $\rho$-derivation, i.e., $Q=0$. Geometrically, we think of a vector bundle equipped with the zero bracket and zero anchor. Moreover, we can consider any $\rho$-commutative algebra $\B = \oplus \B_a$ to be an almost commutative non-negatively graded algebra of degree $1$ by declaring that $\A_{(i,a)} = \{ 0\}$, except when $i=0$, and simply define $\A_{(0,a)} = \B_{a}$. Geometrically, this corresponds to the zero vector bundle.  
\end{example}
\begin{example}[The de Rham complex]
Given any $\rho$-commutative algebra $\B = \oplus \B_a$, it is clear that the de Rham complex $(\Omega^\bullet(\B), \rmd)$ is an almost commutative Q-algebra of degree $1$, where we take $G' = \Z\times G$ and the cocycle to be one previously discussed, see  Section \ref{Sec:AlmComAlg}.
\end{example}
\begin{example}[The canonical action on the noncommutative $2$-torus]
Continuing example \ref{exp:NCTorus}, it is well-known that the module of $\rho$-derivations on the noncommutative $2$-torus $\A_\theta$ is spanned by
\begin{align*}
& \delta_u = 2 \pi \rmi u \frac{\partial}{\partial u}, &\textnormal{and}&& \delta_v = 2 \pi \rmi v \frac{\partial}{\partial v}.
\end{align*}
Moreover, these $\rho$-derivations are the infinitesimal generators of the canonical action of the classical $2$-torus $\mathbb{T}^2$ on the noncommutative $2$-torus. The infinitesimal action is specified by two complex numbers $(c_u, c_v)$.  Following the idea of BRST quantisation, we ``promote'' these complex number to ``ghosts'' and construct a ``BRST differential''. That is, we construct an almost commutative Q-algebra of degree $1$ by first defining $G' = \N \times \Z\times \Z$ and
\begin{align*}
& |u|' = (0,1,0),  & |v|' = (0,0,1), && |\eta_u|' = (1,0,0), && |\eta_v|' = (1,0,0),
\end{align*}
where the cocycle is the obvious one, i.e., $\rho'\big((l, m,n) , (l', m' n') \big) = (-1)^{l l'}\rho\big((m,n) , (m'n') \big)$.  The homological $\rho'$-derivation we define to be
$$Q= 2 \pi \rmi  \, \eta_u u \frac{\partial}{\partial u} \: + \:  2 \pi \rmi \,\eta_v v \frac{\partial}{\partial v},$$
which is clearly of $G'$-degree $(1,0,0)$. We thus interpret this construction as an almost commutative version of an action Lie algebroid. 
\end{example}

The  $G' = (\Z, G)$ degree of the de Rham differential suggests a natural class of  almost commutative Q-algebras.  In particular, consider the algebra generated by a finite number of generators $\{x^a, \zx^\alpha \}$, where we define the $G'$ degrees to be $|x^a|' := (0, |a|)$ and $|\zx^\alpha|' := (1, |\alpha|)$. The $\rho$-commutation laws are then
\begin{align*}
x^a x^b = \rho(|a|, |b|) \: x^b x^a, && \zx^\alpha x^a  = \rho(|\alpha|, |a|)\:  x^a \zx^\alpha, && \zx^\alpha \zx^\beta = {-} \rho(|\alpha|, |\beta|) \:  \zx^\alpha \zx^\beta.
\end{align*}
The algebra $\A$ be take to be the algebra generated by $\{x^a, \zx^\alpha\}$, with possible further relations on the $x's$, however, this will not affect what we write next. We understand the algebra to be a formal power series algebra in $\zx$.  Clearly, by defining $\widetilde{x}=0$ and $\widetilde{\zx} =1$, we obtain an almost commutative algebra of degree $1$. We can now write down the most general $(1,0)$-degree $\rho$-derivation:
$$Q = \zx^\alpha Q_\alpha^a(x) \frac{\partial}{\partial x^a} + \frac{1}{2}\zx^\alpha \zx^\beta Q_{\beta \alpha}^\gamma (x) \frac{\partial}{\partial \zx^\gamma},$$
where we have assumed that $|Q_\alpha^a| = |a| + |\alpha|$ and $|Q_{\beta \alpha}^\gamma| = |\alpha| + |\beta| + |\gamma|$. Note that we have the $\rho$-skewsymmetry $Q_{\beta, \alpha}^\gamma = {-} \rho(|\beta| , |\alpha|)\: Q_{\alpha \beta}^\gamma$. The reader should note that all of this is exactly as expected given the classical case (see Va\u{\i}ntrob \cite{Vaintrob:1997}).
Next we assume that the $\rho'$-derivation is homological, i.e., $Q^2 =0$. This imposes conditions on the components. After a direct, but not  illuminating calculation, one can derive the \emph{structure equations}:
\begin{align*}
& Q_\alpha^a \frac{\partial Q_\beta^b}{\partial x^a} ~{-}~ \rho(|\alpha|, |\beta|)\:  Q_\beta^a \frac{\partial Q_\alpha^b}{\partial x^a}~{-}~ Q_{\alpha \beta}^\gamma Q_\gamma^b =0,\\
& \sum_{\stackrel{\textnormal cyclic}{\alpha, \beta, \gamma} } \rho^{-1}(|\alpha|, |\beta|)\: \left(Q_\alpha^a \frac{\partial Q_{\beta \gamma}^\delta}{\partial x^a} ~{-}~ Q_{\alpha \beta}^\epsilon Q_{\epsilon \gamma}^\delta \right) =0.
\end{align*}
Again, these structure equations are formally identical to the classical structure equations for a Lie algebroid, but now we have to take into account extra factors due to the $\rho$-commutativity.

\section{First-order differential calculi and the Lie bracket}\label{Sec:DiffCalc}
We can construct a first-order differential calculi (in the sense of Woronowicz \cite{Woronowicz:1987}) over any almost commutative algebra $\A$ in the following way (due to Bongaarts \& Pijls \cite{Bongaarts:1994}). First, as standard, we define $\Omega_0^0(\A) := \A$, and then we define 
$$\Omega_0^1(\A) := \left\{ \sum_k \rmd f_k \: g_k  ~ | ~ f_k, g_k \in \A\right  \}, $$
where the differential is the de Rham differential defined earlier. The $\rho'$-commutative algebra of differential forms is then generated by $\A$ and  $\Omega_0^1(\A)$. In general, we have a subalgebra of $\Omega^\bullet(\A)$. Clearly in this was we can interpret $(\Omega^\bullet_0(\A), \rmd)$ as an almost commutative Q-algebra of degree $1$.
\begin{proposition}
Given any almost commutative algebra $\A$, there is a natural bijection 
$$\rho\Der(\A) \cong \Sec(\Omega^\bullet(\A)).$$
\end{proposition}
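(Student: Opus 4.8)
The plan is to unwind the definitions on both sides. By definition, $\Sec(\Omega^\bullet(\A)) = \rho\Der_{-}(\Omega^\bullet(\A))$, the space of $\rho'$-derivations of the $\Z\times G$-graded algebra $\Omega^\bullet(\A)$ that lower the $\Z$-weight (the form degree) by exactly one --- there is nothing of lower weight since the degree of $\Omega^\bullet(\A)$ as a non-negatively graded algebra is one. First I would observe that the algebra $\Omega^\bullet(\A)$ is generated, as a $\rho'$-commutative algebra, by $\Omega^0(\A) = \A$ in weight $0$ together with the weight-one elements, and that every weight-one element is a sum of terms $\rmd g \, f$ with $f,g \in \A$ (this is essentially the content of the preceding discussion; for the full calculus $\Omega^\bullet(\A)$ one shows the one-forms are $\A$-linearly spanned by the $\rmd g$). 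Consequently a weight $-1$ $\rho'$-derivation $D$ is completely determined by its restriction to $\A$ (where it must vanish for weight reasons) and to the one-forms, and on one-forms by its values $D(\rmd g)$ for $g \in \A$, which lie in $\Omega^0(\A) = \A$.

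Next I would define the two maps of the bijection explicitly. Given $X \in \rho\Der(\A)$, associate the weight $-1$ derivation $D_X$ characterised by $D_X|_\A = 0$ and $D_X(\rmd g) = X(g)$, extended to all one-forms by the $\rho'$-Leibniz rule: $D_X(\rmd g\, f) = X(g)\, f + \rho'(\ldots)\, \rmd g\, D_X(f) = X(g) f$ since $D_X$ kills $\A$; one then checks this is well-defined (independent of the presentation $\sum_k \rmd g_k\, f_k$) and extends to a genuine $\rho'$-derivation of $\Omega^\bullet(\A)$ by requiring the Leibniz rule on products of one-forms. In the other direction, given $D \in \Sec(\Omega^\bullet(\A))$, set $X_D(g) := \langle D(\rmd g)\rangle \in \A$; the $\rho'$-derivation property of $D$ applied to $\rmd(gh) = \rmd g\, h + \rho(|g|,|h|)\,\rmd h \,g$ (wait --- more precisely $\rmd(gh) = (\rmd g) h + g\, \rmd h$ adjusted by $\rho$), combined with $D(g)=D(h)=0$, yields exactly the $\rho$-derivation rule for $X_D$. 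These assignments are manifestly inverse to each other and $G$-degree preserving, so they furnish the claimed bijection; one should also note it intertwines the $\B$-module structures and is in fact an isomorphism of $\rho$-Lie algebras, matching the derived $\rho$-antibracket on the right with the $\rho$-commutator bracket on $\rho\Der(\A)$ on the left, which is the point of the next section.

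\textbf{Main obstacle.} The only genuine subtlety is well-definedness: showing that $D_X(\sum_k \rmd g_k\, f_k)$ depends only on the one-form $\sum_k \rmd g_k\, f_k \in \Omega^1(\A)$ and not on the chosen representation, and dually that $D \in \Sec(\Omega^\bullet(\A))$ really is determined by its action on exact one-forms $\rmd g$. This amounts to knowing that the relations among the $\rmd g$'s are generated by $\A$-linearity and the Leibniz-type identity $\rmd(fg) = (\rmd f) g + \rho(|\rmd|',|f|')\, f\,(\rmd g)$ together with $\rmd$ being a derivation, which is built into the construction of $\Omega^\bullet(\A)$ via the definition of $\rmd$ on zero-forms ($\langle X|\rmd f\rangle = X(f)$) and the first Cartan identity $\rmd^2 = 0$. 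Once this bookkeeping is done, every remaining step is a routine check using only the cocycle identities for $\rho$ (and hence $\rho'$) and the already-established Cartan calculus \eqref{eqn:CartanCalc}, exactly parallel to the classical correspondence between vector fields and weight $-1$ vector fields on the shifted tangent bundle.
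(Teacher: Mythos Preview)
Your proposal is correct and follows essentially the same strategy as the paper: the forward map sends $X$ to the weight $-1$ $\rho'$-derivation acting as $X$ on exact one-forms and vanishing on $\A$, and the inverse sends $V$ to $V\circ\rmd|_\A$, with the derivation rule for $X_D$ checked exactly as you indicate via $\rmd(fg)=(\rmd f)g + f\,\rmd g$.

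The one difference worth pointing out concerns your ``main obstacle''. The paper sidesteps the well-definedness issue entirely by taking the forward map to be the interior derivative $i_X$ already constructed in Section~\ref{Sec:AlmComAlg} via the global pairing formula on $p$-forms, rather than by extension from generators. Since $i_X$ is defined on all of $\Omega^\bullet(\A)$ from the outset as a $\rho'$-derivation, there is nothing to check about independence of presentation, and the proof collapses to the single observation $i_{V\circ\rmd|_\A}(\rmd f)=V(\rmd f)$ showing the two maps are mutually inverse. Your route works too, but recognising $D_X=i_X$ saves you the bookkeeping you flagged.
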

\begin{proof}
Given any $X \in \rho\Der(\A)$, we can send it to a unique $i_X \in \rho'\Der_(\Omega^\bullet(\A)) :=  \Sec(\Omega^\bullet(\A))$. Recall that part of the definition of the interior derivative is that $i_X(\rmd f) = X(f)$. \par 
In the other direction, consider an arbitrary $V \in \Sec(\Omega^\bullet(\A))$, of $G$-degree $|V|$. Just on weight grounds it is clear that $V(g) =0$ for any $g\in \A$. Thus, $V(\rmd f \: g) = V(\rmd f)\:g$.  In particular, this implies that $V\circ \rmd|_\A$ is a unique $\rho$-derivation on $\A$ of $G$-degree $V$. Direct calculation show we have the required derivation rule
$$V(\rmd(fg)) = V( \rmd f \: g + f \: \rmd  g) = V(\rmd f) \: g + \rho(|V|, |f|) \: f \: V(\rmd g).$$
Observing that
$$i_{V \circ \rmd |_\A}(\rmd f) = V(\rmd f) = V\circ \rmd|_\A(f) ,$$
shows we have constructed mutual inverses. Thus we have the desired bijection.
\end{proof}
The above proposition states that - not surprisingly - that we can identify sections of $\Omega^\bullet_0(\A)$ with $\rho$-derivations of $\A$. This is of course exactly what have in the classical case that $\A = C^\infty(M)$ where $M$ is a (finite dimensional) smooth manifold.  One has to keep in mind that by working with the algebras we are working in a dual picture to the geometry.  \par 
\noindent \textbf{Statement:} For any almost commutative algebra, the $\rho$-Lie bracket on $\rho\Der(\A)$ is  identified as the derived $\rho'$-antibracket with respect to the de Rham differential. That is, we make the identification (as sections of $\Omega^\bullet_0(\A)$) 
$$ \SN{i_X, i_Y}_\rmd := [[\rmd, i_X]_{\rho'}, i_Y  ]_{\rho'} = i_{[X,Y]_\rho} \, ,$$
using the Cartan identities (\ref{eqn:CartanCalc}). Note that this is in complete agreement with the classical case (see  Kosmann-Schwarzbach \cite{Kosmann-Schwarzbach:2004}).\par
Let us assume that the underlying almost commutative algebra $\A$ is finitely generated by $\{x^a \}$. We define $|x^a| := |a| \in G$ and  write the $\rho$-commutation rule as
$$x^a x^b = \rho(|a|, |b|) \: x^b x^a.$$
To construct the first-order differential calculi we append the (free) generators $\rmd x^b$ to the algebra and extend to a $G' = (\Z, G)$ grading as previously described. That is, we define the grading as
\begin{align*}
|x^a| = (0, |b|), && |\rmd x^b| = (1, |b|).
\end{align*}
The $\rho$-commutation rules are thus
\begin{align*}
x^a \rmd x^b  = \rho(|a|, |b|) \; \rmd x^b x^a, &&  \rmd x^a\rmd x^b =  {-} \rho(|a|, |b|) \; \rmd x^b  \rmd x^a.
\end{align*}
Any $\rho$-derivation can then be defined in terms of (algebraic) partial derivatives. Specifically, any $\rho$-derivation can be mapped to the interior derivative which is given by
$$i_X = X^a(x)\frac{\partial}{\partial \rmd x^a}.$$
The de Rham differential is given by
$$\rmd = \rmd x^a \frac{\partial}{\partial x^a}.$$
Clearly, we have an almost commutative Q-algebra of degree $1$. Following the definitions, we see that
$$\SN{i_X, i_Y}_\rmd = \left(X^a(x)\frac{Y^b(x)}{\partial x^a} ~ {-} ~ \rho(|X|, |Y| ) \:  Y^a(x)\frac{X^b(x)}{\partial x^a} \right) \frac{\partial}{\partial \rmd x^b}\,,$$
as fully expected.

\section{Q-modules over almost commutative Q-algebras}\label{Sec:Qmodules}
 Lie algebroid modules  \`{a} la Va\u{\i}ntrob are important  in the representation and deformation theory of Lie algebroids. Thus,  it makes sense to carefully define the related notion for almost commutative Q-algebras of degree $1$.
 \begin{definition}
Let $(\A, Q)$ be an almost commutative Q-algebra of degree $1$. A \emph{(left) Q-module over $\A$} is a   $(\Z, G)$-graded vector space $\mathcal{M}$, equipped with the following structures:
\begin{itemize}
\item An action
\begin{align*}
\mu :  \:  & \A \otimes_\mathbb{K} \mathcal{M} \longrightarrow \mathcal{M}\\
 & (a, v) \mapsto \mu(a,v) =: a v,
\end{align*}
that is $(\Z,G)$-degree preserving. By an action we mean that $\mu(a, \mu(b,v)) = \mu (ab, v)$.
\item A differential
$$\nabla :  \mathcal{M} \longrightarrow \mathcal{M}, $$
of $G$-degree $|Q|$ and $\Z$-degree $1$, that satisfies the following $\rho$-Leibniz  rule,
$$\nabla (a v) =  Q(a)\: v + \rho(|Q|, |a|) \:  a \: \nabla(v),$$
for all $a \in \A$ and $v \in \mathcal{M}$.  By a differential we naturally mean that $\nabla^2 =0$.
\end{itemize}
 \end{definition}
Naturally, it makes sense to refer to the map $\nabla$ as a \emph{flat connection}. Moreover, one can speak of the cohomology of an almost commutative Q-algebra of degree $1$ with values in the module $\mathcal{M}$. A right Q-module over $\A$ is similarly defined.
\begin{example}[Adjoint module]\label{exp:AdjointModule}
It is clear that $\rho\Der(\A)$ is a $\Z$-graded module over $\A$ by considering weight. We claim that $\nabla(-) := [Q, -]_\rho$ provides the required flat connection. The action is the standard one. The bi-degree of the defined connection is obvious and the flatness follows from $\nabla^2(-) = \half [[Q, Q] , -] =0$. The Leibniz rule in the definition of a flat connection is exactly the  $\rho$-Leibniz rule for the $\rho$-commutator, i.e., \eqref{eq:LeibnizRuleLie}.
\end{example}
\begin{example}[Coadjoint module]
Consider the module of differential forms on $\A$ which we bi-grade using  $G$-degree and the weight rather than `form degree';
$$\Omega^\bullet (\A) = \bigoplus_{\stackrel{i \geq 1}{a \in G}} \Omega^\bullet _{(i,a)}(\A).$$
Here it is convenient to consider the right module structure over $\A$. Then using the Cartan calculus, we claim that $\nabla := L_Q$ provides the structure of a flat connection. The condition in the bi-degree is clear and from the Cartan calculus we have that
$$[L_Q, L_Q]_{\rho'} = L_{[Q,Q]_\rho} =0.$$
Directly from the properties of the Lie derivative we obtain 
$$\nabla(\omega \: a) = \nabla( \omega) \: a  + \rho(|Q|, |\omega|) \:  \omega \: Q(a).$$
Thus we have the structure of a right Q-module.
\end{example}
\section{Deformations and symmetries of almost commutative Q-algebras}\label{Sec:DefSymm}
Again, we will stick to the degree $1$ case explicitly. Example \ref{exp:AdjointModule} shows that the space of $\rho$-derivations of an almost commutative algebra of degree $1$ canonically comes equipped with a flat connection. More than this, the $\rho$-Jacobi identity for the $\rho$-commutator shows that we have a derivation rule
\begin{equation}\label{eqn:nablederrule}
\nabla[X,Y]_\rho = [\nabla X, Y]_\rho ~+~ \rho(|Q|, |X|) \; [X, \nabla Y]_\rho \, .
\end{equation}
Thus, we have what one can call a \emph{differential $\rho$-Lie algebra}. The general ethos of deformation theory is that a differential Lie algebra always controls the deformation of some structure. The case at hand is no different.
\begin{theorem}
Deformations of an almost commutative Q-algebra of degree $1$ are controlled by the differential $\rho$-Lie algebra $\big(\rho\Der(\A), \, [-,-]_\rho,\, \nabla = [Q,-]_\rho \big)$. Specifically, $Q_X := Q + X$, $X \in \rho\Der(\A)$, is a homological $\rho$-derivation of $G$-degree $|Q|$ and weight $1$ if and only if
\begin{enumerate}
\item $|X| = |Q|$, $\widetilde{X} =1$, and
\item  $\nabla X + \frac{1}{2} [X, X]_\rho =0$.
\end{enumerate}
\end{theorem}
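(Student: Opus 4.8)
The plan is to unwind the definitions of ``homological $\rho$-derivation'' and ``degree $1$'' for the candidate $Q_X := Q + X$, and to translate the two conditions $\# Q_X = 1$ and $[Q_X, Q_X]_\rho = 0$ into the stated pair of conditions on $X$. First I would note that $Q_X$ is automatically a $\rho$-derivation since $\rho\Der(\A)$ is a linear subspace; so the content is entirely in the two bulleted conditions of Definition \ref{def:almostCommQ} together with the requirement that $Q_X$ have $G$-degree $|Q|$ and $\Z$-degree (weight) $1$.

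\medskip

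For the grading conditions: $Q_X$ is $G$-homogeneous of degree $|Q|$ and weight $1$ precisely when $X$ is $G$-homogeneous of degree $|Q|$ and weight $1$ (the weight $1$ component of $X$ would have to equal $Q_X - Q$, and there is nothing else for the other components of $X$ to cancel against); this is condition (i). Given (i), we have $\#Q_X = \#Q = 1$ automatically, since $\#$ depends only on $|Q_X| = |Q|$ via $\#Q_X = \tfrac{1-\rho(|Q|,|Q|)}{2}$, and $\rho(|Q|,|Q|) = -1$. So the first bullet of the definition of homological $\rho$-derivation is equivalent, given the grading condition, to $|X| = |Q|$ with $\widetilde X = 1$.

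\medskip

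For the square-zero condition: I would expand $[Q_X, Q_X]_\rho = [Q+X, Q+X]_\rho$ using bilinearity of the $\rho$-commutator. Since $Q$ and $X$ have the same $G$-degree $|Q|$, the relevant cocycle factors all collapse ($\rho(|Q|,|Q|) = -1$ governs each cross term), and the expansion gives $[Q+X,Q+X]_\rho = [Q,Q]_\rho + [Q,X]_\rho + [X,Q]_\rho + [X,X]_\rho$. By $\rho$-skewsymmetry of the $\rho$-commutator and $\rho(|Q|,|X|) = \rho(|Q|,|Q|) = -1$, we get $[X,Q]_\rho = -\rho(|Q|,|X|)[Q,X]_\rho = [Q,X]_\rho$, so the two cross terms add to $2[Q,X]_\rho$. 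Using $[Q,Q]_\rho = 0$ (since $(\A,Q)$ is a $Q$-algebra), we obtain $[Q_X,Q_X]_\rho = 2[Q,X]_\rho + [X,X]_\rho = 2\big(\nabla X + \tfrac12 [X,X]_\rho\big)$, where $\nabla X = [Q,X]_\rho$ is the connection from Example \ref{exp:AdjointModule} and equation \eqref{eqn:nablederrule}. Hence $[Q_X,Q_X]_\rho = 0$ is equivalent to the Maurer--Cartan-type equation $\nabla X + \tfrac12[X,X]_\rho = 0$, which is condition (ii). One also checks that $\nabla X + \tfrac12[X,X]_\rho$ has weight $2$ and $G$-degree $0$, consistent with it being half of $[Q_X,Q_X]_\rho$. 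Assembling the equivalences $(\#Q_X = 1 \text{ and correct grading}) \Leftrightarrow (\mathrm{i})$ and $([Q_X,Q_X]_\rho = 0) \Leftrightarrow (\mathrm{ii})$ completes the proof. I do not anticipate a genuine obstacle here; the only mild care needed is the sign bookkeeping in the expansion of $[Q+X,Q+X]_\rho$ and confirming that all the cocycle factors reduce to $\pm1$ because $Q$ and $X$ share the $G$-degree $|Q|$ with $\rho(|Q|,|Q|) = -1$.
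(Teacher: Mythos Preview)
Your proof is correct and follows exactly the same approach as the paper's: expand $[Q+X,Q+X]_\rho$ bilinearly, use $[Q,Q]_\rho=0$ and $[X,Q]_\rho=[Q,X]_\rho$ (from $\rho(|Q|,|Q|)=-1$) to reduce to $2\nabla X + [X,X]_\rho$, then read off the Maurer--Cartan equation. One small slip in your parenthetical consistency check: $\nabla X + \tfrac12[X,X]_\rho$ has $G$-degree $2|Q|$, not $0$ in general (think $G=\Z$), though this does not affect the argument.
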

\begin{proof}
The condition on the $G$-degree and weight is obvious. The second part requires only a short calculation:
\begin{align*}
[Q_X,Q_X]_\rho &= ~ [Q,Q]_\rho + [Q, X]_\rho + [X, Q]_\rho +  [X,X]_\rho\\
               &= ~ 2 [Q , X]_\rho + [X,X]_\rho.
 \end{align*}
 Thus, if we insist on $[Q_X, Q_X]_\rho =0$, then we require that  $\nabla X + \frac{1}{2} [X, X]_\rho =0$.
\end{proof}
In other words, we require that $X$ satisfy the required Maurer--Cartan equation.
\begin{remark}
For the case of classical Lie algebroids, the reader can consult Ji \cite{Ji:2014}. Note that Crainic \& Moerdijk \cite{Crainic:2008} also define a differential graded Lie algebra controlling the deformation theory of Lie algebroids  and that this  is isomorphic to the differential graded Lie algebra studied by Ji. However, the use of Q-manifolds in this setting vastly simplifies the constructions. Historically, the definition of the adjoint module for Lie algebroids can be traced back to Va\u{\i}ntrob \cite{Vaintrob:1997} and  Grabowska, Grabowski \& P.~Urba\'{n}ski \cite{Grabowska:2003}. The deformation theory of VB-algebroids  has been explored by La Pastina \& Vitagliano \cite{LaPastina:2018}.

\end{remark}
\begin{definition}
Let $(\A,Q)$ be an almost commutative Q-algebra of degree $1$. Then a $\rho$-derivation $X \in \rho\Der_0(\A)$ is said to be an \emph{infinitesimal symmetry} or just a \emph{symmetry}, if and only if
$$ [X, Q]_\rho =0\,.$$
\end{definition}
Equivalently,  $X \in \rho\Der_0(\A)$ is a symmetry if and only if $\nabla X =0$. This follows trivially from $[X,Q]_\rho = {-}\rho(|X|, |Q|) \nabla X$. Thus, we will refer to a symmetry $X$ as an \emph{inner symmetry} if and only if there exists a section $\omega \in \Sec(\A)$ such that $X =\nabla \omega$.  In reverse, every section leads to an inner symmetry in an obvious way.
\begin{proposition}
The set of symmetries of $(\A, Q)$ is a $\rho$-Lie algebra with respect to the $\rho$-commutator bracket.
\end{proposition}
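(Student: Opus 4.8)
The plan is to verify that the set of symmetries, call it $\mathfrak{s} := \{X \in \rho\Der_0(\A) \mid [X,Q]_\rho = 0\}$, is a linear subspace of the $\rho$-Lie algebra $\rho\Der_0(\A)$ which is closed under the $\rho$-commutator bracket; since $\rho\Der_0(\A)$ is already a $\rho$-Lie algebra (being the $\Z$-degree $0$ component of $\rho\Der(\A)$, a $\rho$-Lie subalgebra of the $\rho$-Lie algebra $\rho\Der(\A)$ of $G$-degree $0$), this suffices. Linearity is immediate from the bilinearity of $[-,-]_\rho$, so the only real content is closure under the bracket.

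For closure, the key step is the $\rho$-Jacobi identity applied to $X$, $Y \in \mathfrak{s}$ and $Q$. Using the Loday--Leibniz form of the $\rho$-Jacobi identity for the $\rho$-commutator (equivalently, the fact that $[Q,-]_\rho = \nabla$ is a $\rho$-derivation of the bracket, i.e.\ equation~\eqref{eqn:nablederrule}), one has
$$\nabla [X,Y]_\rho = [\nabla X, Y]_\rho + \rho(|Q|,|X|)\,[X, \nabla Y]_\rho.$$
Since $X$ and $Y$ are symmetries, $\nabla X = 0$ and $\nabla Y = 0$ (recall $[X,Q]_\rho = -\rho(|X|,|Q|)\nabla X$, so $[X,Q]_\rho = 0 \iff \nabla X = 0$, and likewise for $Y$). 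Hence both terms on the right vanish and $\nabla [X,Y]_\rho = 0$, i.e.\ $[[X,Y]_\rho, Q]_\rho = 0$. It remains to note that $[X,Y]_\rho$ again lies in $\rho\Der_0(\A)$: it is a $\rho$-derivation because $\rho\Der(\A)$ is closed under the $\rho$-commutator, it has $G$-degree $|X| + |Y| = 0$, and it has $\Z$-degree (weight) $\widetilde{X} + \widetilde{Y} = 0$ since $[\rho\Der_i(\A), \rho\Der_j(\A)]_\rho \subset \rho\Der_{i+j}(\A)$. Therefore $[X,Y]_\rho \in \mathfrak{s}$.

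There is essentially no obstacle here beyond bookkeeping: the statement is a formal consequence of the derivation property~\eqref{eqn:nablederrule} of $\nabla$, which was already established from the $\rho$-Jacobi identity. The mildest point to be careful about is that $\mathfrak{s}$ sits inside $\rho\Der_0(\A)$ (not all of $\rho\Der(\A)$), so one should confirm that the bracket of two weight-zero $G$-degree-zero $\rho$-derivations stays in that component, which follows from the additivity of weight and $G$-degree under the $\rho$-commutator; the inherited $\rho$-skewsymmetry and $\rho$-Jacobi identity then come for free from $\rho\Der(\A)$.
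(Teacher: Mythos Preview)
Your approach is essentially identical to the paper's: both dispose of the vector space structure by linearity of the $\rho$-commutator and then invoke the derivation identity~\eqref{eqn:nablederrule} to obtain closure under the bracket.

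One correction, however: you have misread the notation. The subscript $0$ in $\rho\Der_0(\A)$ refers only to the \emph{weight} (the $\Z$-degree), not to the $G$-degree; indeed the paper remarks immediately after this proposition that ``we have not made any assumption about the $G$-degree''. Hence your assertion that $[X,Y]_\rho$ ``has $G$-degree $|X|+|Y|=0$'' and your phrase ``weight-zero $G$-degree-zero $\rho$-derivations'' are unwarranted. Fortunately this is also harmless: membership in $\rho\Der_0(\A)$ only requires the weight to vanish, which you do verify correctly via additivity. Delete the $G$-degree claims and your argument is complete and matches the paper's.
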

\begin{proof}
It is evident, due to the linearity of the $\rho$-commutator bracket, that constant multiples and sums of symmetries are again symmetries. Thus, we naturally have the structure of a $G$-graded vector space. The only thing to prove is that symmetries close under the $\rho$-commutator bracket. First, note that the $\rho$-commutator bracket is of weight zero, and so the weight is preserved. Second, we need to show that the $\rho$-commutator bracket of two symmetries is again a symmetry, but this is obvious in light of \eqref{eqn:nablederrule}.
\end{proof}
Note that we have insisted that symmetries be of weight zero. The reason for this is to ensure that  $\Sec(\A)$ is closed under the action of any symmetry. We have not made any assumption about the $G$-degree.
\begin{proposition}
Let $(\A, Q)$ be an almost commutative Q-algebra of degree $1$ and let $X\in \rho\Der_0(\A)$ be an infinitesimal symmetry. Then
$$L_X\SN{\sigma, \psi}_Q =  \SN{L_X\sigma, \psi}_Q ~+~ \rho(|X|, |Q| + |\sigma|) \: \SN{\sigma, L_X\psi}_Q,$$
for all $\sigma$ and $\psi \in \Sec(\A)$.
\end{proposition}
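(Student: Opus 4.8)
The plan is to reduce the claimed "Leibniz rule for the symmetry action on the derived bracket" to purely algebraic identities for the $\rho$-commutator, since the derived $\rho$-antibracket $\SN{\sigma,\psi}_Q$ is built entirely out of $\rho$-commutators with $Q$. First I would recall that on sections $\sigma \in \Sec(\A) = \rho\Der_{-}(\A)$ the Lie derivative $L_X$, for $X \in \rho\Der_0(\A)$, coincides up to sign with the $\rho$-commutator: indeed on the level of derivations $L_X \sigma = [X,\sigma]_\rho$ (this is the standard fact, consistent with the Cartan identities \eqref{eqn:CartanCalc}, since a section acting on $\B$ is $\sigma(Q(\cdot))$ and $L_X$ is the infinitesimal automorphism). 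So the statement to prove becomes an identity about $[X, \SN{\sigma,\psi}_Q]_\rho$ expanded via the definition $\SN{\sigma,\psi}_Q = \rho(|\sigma|+|Q|,|Q|)\,[[Q,\sigma]_\rho,\psi]_\rho$.

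The key computational step is then: apply $\ad_X := [X,-]_\rho$ to the double bracket $[[Q,\sigma]_\rho,\psi]_\rho$ and use the $\rho$-Jacobi identity (in Loday--Leibniz form) for the $\rho$-commutator twice, exactly as in the proof of Theorem~\ref{thm:rhoLieAlg}(iii). This yields
$$[X,[[Q,\sigma]_\rho,\psi]_\rho]_\rho = [[X,[Q,\sigma]_\rho]_\rho,\psi]_\rho + \rho(|X|,|Q|+|\sigma|)\,[[Q,\sigma]_\rho,[X,\psi]_\rho]_\rho.$$
For the first term on the right I would push $\ad_X$ inside once more: $[X,[Q,\sigma]_\rho]_\rho = [[X,Q]_\rho,\sigma]_\rho + \rho(|X|,|Q|)\,[Q,[X,\sigma]_\rho]_\rho$. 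Here is where the symmetry hypothesis enters decisively: $[X,Q]_\rho = 0$, so the first piece drops out and we are left with $\rho(|X|,|Q|)\,[Q,[X,\sigma]_\rho]_\rho$. Substituting back and recognizing $[X,\sigma]_\rho = L_X\sigma$, $[X,\psi]_\rho = L_X\psi$ (both still in $\Sec(\A)$ since $X$ has weight zero and $\sigma,\psi$ have negative weight), one reassembles the two terms into $\SN{L_X\sigma,\psi}_Q$ and $\SN{\sigma,L_X\psi}_Q$. The remaining work is bookkeeping: tracking the prefactor $\rho(|\sigma|+|Q|,|Q|)$ through, absorbing the $\rho(|X|,|Q|)$ and $\rho(|X|,|Q|+|\sigma|)$ factors, and using the cocycle identities $\rho(a+b,c) = \rho(a,c)\rho(b,c)$, $\rho(a,b+c) = \rho(a,b)\rho(a,c)$ to verify that the coefficient of $\SN{\sigma,L_X\psi}_Q$ is precisely $\rho(|X|,|Q|+|\sigma|)$ and that of $\SN{L_X\sigma,\psi}_Q$ is $1$. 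Note $|L_X\sigma| = |X| + |\sigma|$, which is what makes these prefactors match up.

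The main obstacle is purely the sign/cocycle bookkeeping: there are three nested applications of the $\rho$-Jacobi identity, each introducing cocycle factors with sums of degrees as arguments, and one must be careful that the $G$-degree of $[Q,\sigma]_\rho$ is $|Q|+|\sigma|$ (not $|\sigma|$) when commuting $X$ past it. I would organize the computation by first establishing the auxiliary identity $[X,\SN{\sigma,\psi}_Q]_\rho = \SN{[X,\sigma]_\rho,\psi}_Q + \rho(|X|,|Q|+|\sigma|)\SN{\sigma,[X,\psi]_\rho}_Q$ at the level of $\rho$-commutators (valid for any $X$ with $[X,Q]_\rho=0$, regardless of weight), and only at the end invoke the identification of $L_X$ with $\ad_X$ on $\Sec(\A)$ together with the weight-zero hypothesis to conclude that both $L_X\sigma$ and $L_X\psi$ land back in $\Sec(\A)$, so that the right-hand side genuinely makes sense as derived $\rho$-antibrackets of sections. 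This mirrors the classical statement that $L_X$ is a derivation of the Schouten--Nijenhuis bracket for $X$ a symmetry of the homological vector field, and the proof is formally the super case verbatim with signs replaced by cocycle values.
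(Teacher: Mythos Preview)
Your proposal is correct and follows essentially the same route as the paper: identify $L_X$ with $[X,-]_\rho$ on $\Sec(\A)$, expand $[X,[[Q,\sigma]_\rho,\psi]_\rho]_\rho$ via two applications of the $\rho$-Jacobi identity, invoke $[X,Q]_\rho=0$ to kill the extra term, and reassemble the pieces using the cocycle relations. The paper carries the $[[X,Q]_\rho,\sigma]_\rho$ term through to the final line before invoking the symmetry hypothesis, whereas you drop it immediately, but this is only a cosmetic difference in presentation.
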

\begin{proof}
The proof follows from direct calculation using the $\rho$-Jacobi identity repeatedly. Explicitly,
\begin{eqnarray*}
L_X \SN{\sigma, \psi}_Q &=& \rho(|\sigma | + |Q| , |Q| ) \: [ X,[ [    Q,\sigma]_\rho\psi]_\rho  ]_\rho\\
                        &=& \rho(|\sigma | + |Q| , |Q| ) \: \big( ~[[[X,Q]_              							\rho, \sigma]_\rho, \psi]_\rho
                        +  \rho(|X|,|Q|)[ [Q,[X,\sigma]_\rho ]_\rho \psi]									_\rho \\
                        && +\rho(|X|, |Q| + |\sigma |) [[Q,\sigma]_\rho, 								[X,\psi]_\rho]_\rho ~ \big)\\
                        &=& \SN{L_X\sigma, \psi}_Q ~+~ \rho(|X|, |Q| + |								\sigma|) \: \SN{\sigma, L_X\psi}_Q ~+~ \rho(|								\sigma| +|Q| , |Q|) \: [[L_X Q, \sigma]_\rho  , 							\psi]_\rho.
\end{eqnarray*}
Then from the condition that we have a symmetry, we obtain the desired result.
\end{proof}
\begin{example}
Examining the de Rham complex $(\Omega^\bullet(\A), \rmd)$ of any almost commutative algebra and in particular the Cartan calculus \eqref{eqn:CartanCalc}, we see that the Lie derivative $L_X$ for any $X\in \rho\Der(\A)$ is an infinitesimal symmetry.
\end{example}

\section*{Acknowledgements}
We cordially thank Prof.~Tomasz~Brzezi\'{n}ski and Prof.~Richard~Szabo for their interest and valuable advice.  We furthermore, thank the anonymous referee for their invaluable comments and suggestions. 

\end{document}